\documentclass[11pt]{article}
\usepackage{amsmath}
\usepackage[utf8]{inputenc}
\usepackage{tikz}
\usepackage{amsthm}
\usepackage{thmtools}
\usepackage{hyperref}
\usepackage{cleveref}
\usepackage[a4paper, total={7in, 9in}]{geometry}
\usepackage{blindtext}
\usepackage{comment}
\usepackage{geometry}
\geometry{
	a4paper,
	total={170mm,257mm},
	left=25mm,
	right=30mm,
	top=20mm,
	bottom=40mm,
}
\providecommand{\keywords}[1]
{
	\small	
	\textbf{\textit{Keywords:}} #1
}
\usepackage{amsmath}
\NewDocumentEnvironment{alignb}{b}{%
	\begin{align*}
		\refstepcounter{equation} #1 \tag{\theequation}
	\end{align*}
}{}
\allowdisplaybreaks
\usepackage{enumerate}
\makeatletter
\newcommand{\myitem}[1]{%
	\item[#1]\protected@edef\@currentlabel{#1}%
}
\makeatother
\usepackage{clipboard}
\declaretheorem[numberwithin=section]{theorem, definition}
\declaretheorem{lemma, proposition, remark}[style=plain,
numberwithin=section]
\numberwithin{equation}{section}
\newtheorem{mytheorem}{Theorem}
\newtheorem{defi}{Definition}[section]
\numberwithin{mytheorem}{section}
\usepackage{amsbsy}
\usepackage{graphicx}
\usepackage[T1]{fontenc}
\usepackage{lmodern}
\usepackage{imakeidx}
\usepackage{amssymb}
\pagestyle{plain}
\usepackage{thmtools}
\usepackage{cite}
\newenvironment{myproof}[2] {\paragraph{Proof of {#1} {#2} :}}{\hfill$\square$}

\usepackage[english]{babel}
\parindent10pt
\title{Existence of multiple normalized solutions to a critical growth Choquard equation involving mixed operator}
\author{Nidhi Nidhi\footnote{Department of Mathematics, Indian Institute of Technology, Delhi, Hauz Khas, New Delhi-110016, India. e-mail: nidhi.nidhi@maths.iitd.ac.in}\; and K. Sreenadh\footnote{Department of Mathematics, Indian Institute of Technology, Delhi, Hauz Khas, New Delhi-110016, India. e-mail: sreenadh@maths.iitd.ac.in}\;}
\date{}
\begin{document}
	\maketitle
	\begin{abstract}
		\noindent In this paper we study the normalized solutions of the following critical growth Choquard equation with mixed local and non-local operators: 
		\begin{equation*}
			\begin{array}{rcl}
				-\Delta u +(-\Delta)^s u & = & \lambda u +\mu |u|^{p-2}u +(I_{\alpha}*|u|^{2^*_{\alpha}})|u|^{2^*_{\alpha}-2}u \text{ in } \mathbb{R}^N\\
				\left\| u \right\|_2 & =  & \tau,
			\end{array}
		\end{equation*}
		here $N\geq 3$, $\tau>0$, $I_{\alpha}$ is the Riesz potential of order $\alpha\in (0,N)$, $2^*_{\alpha}=\frac{N+\alpha}{N-2}$ is the critical exponent corresponding to the Hardy Littlewood Sobolev inequality, $(-\Delta)^s$ is the non-local fractional Laplacian operator with $s\in (0,1)$, $\mu>0$ is a parameter and $\lambda$ appears as Lagrange multiplier. We have shown the existence of atleast two distinct solutions in the presence of mass subcritical perturbation, $\mu |u|^{p-2}u$ with $2<p<2+\frac{4s}{N}$ under some assumptions on $\tau$.\\
		\noindent \keywords{ Normalized solution, Choquard equation, critical exponent, mixed local and non-local operator, $L^2$-subcritical perturbation, nonlinear Scr$\ddot{\text{o}}$dinger equation driven by mixed operator.}
	\end{abstract}
	\section{Introduction}
	This article concerns the existence of multiple normalized solutions to the following critical growth Choquard equation involving mixed diffusion-type operator:
	\begin{equation}\label{prob}
		\begin{array}{rcl}
			-\Delta u +(-\Delta)^s u & = & \lambda u +\mu |u|^{p-2}u +(I_{\alpha}*|u|^{2^*_{\alpha}})|u|^{2^*_{\alpha}-2}u \text{ in } \mathbb{R}^N\\
			\left\| u \right\|_2 & =  & \tau,
		\end{array}
	\end{equation}
	where $N\geq 3$, $\tau>0$, $2<p<2+\frac{4s}{N}$, $\mu>0$ is a parameter and $\lambda$ appears as Lagrange multiplier. The fractional Laplace operator $(-\Delta)^s$ is defined as follows:
	\begin{equation*}
		(-\Delta)^s u = \frac{C(N,s)}{2}\text{P.V} \int_{\mathbb{R}^N}\frac{u(x)-u(y)}{|x-y|^{N+2s}}dy,
	\end{equation*}
	with P.V being the abbreviation for principal value, and $C(N,s)$ is a normalizing constant, refer \cite{Nezza2012hitchhiker} for a clearer understanding.
	For the sake of convenience, we will take $C(N,s)=2$. Here,	$I_{\alpha}$ is the Riesz potential of order $\alpha\in (0,N)$ given by
	\begin{equation}\label{A_alpha}
		I_{\alpha}(x)=\frac{A_{N,\alpha}}{|x|^{N-\alpha}}\text{ with } A_{N,\alpha}=\frac{\Gamma(\frac{N-2}{2})}{\pi^{\frac{N}{2}}2^{\alpha}\Gamma(\frac{\alpha}{2})}\text{ for every }x\in\mathbb{R}^N\setminus \{0\},
	\end{equation}
	and $2^*_{\alpha}=\frac{N+\alpha}{N-2}$, is the critical exponent with respect to the following well known Hardy-Littlewood-Sobolev(HLS) inequality:
	\begin{proposition}\label{prop1.1}
		Let $t,r>1$ and $0<\alpha <N$ with $1/t+1/r=1+\alpha/N$, $f\in L^t(\mathbb{R}^N)$ and $h\in L^r(\mathbb{R}^N)$. There exists a sharp constant $C(t,r,\alpha,N)$ independent of $f$ and $h$, such that
		\begin{equation}\label{HLS}
			\int_{\mathbb{R}^N}\int_{\mathbb{R}^N}\frac{f(x)h(y)}{|x-y|^{N-\alpha}}~dxdy \leq C(t,r,\alpha,N) \|f\|_{L^t}\|h\|_{L^r}.
		\end{equation}
		If $t=r=2N/(N+\alpha)$, then
		\begin{align}\label{C_alpha}
			C(t,r,\alpha,N)=C(N,\alpha)= \pi^{\frac{N-\alpha}{2}}\frac{\Gamma(\frac{\alpha}{2})}{\Gamma(\frac{N+\alpha}{2})}\left\lbrace \frac{\Gamma(\frac{N}{2})}{\Gamma(N)}\right\rbrace^{-\frac{\alpha}{N}}.
		\end{align}
		Equality holds in  \eqref{HLS} if and only if $\frac{f}{h}\equiv constant$ and
		$\displaystyle h(x)= A(\gamma^2+|x-a|^2)^{(N+\alpha)/2}$
		for some $A\in \mathbb{C}, 0\neq \gamma \in \mathbb{R}$ and $a \in \mathbb{R}^N$.
	\end{proposition}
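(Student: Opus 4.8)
The plan is to separate the statement into two parts of very different difficulty: (a) the existence of \emph{some} finite best constant $C(t,r,\alpha,N)$ making \eqref{HLS} hold, and (b) the explicit value \eqref{C_alpha} in the conformal (diagonal) case $t=r=2N/(N+\alpha)$ together with the classification of extremizers. Part (a) is just a boundedness statement for a convolution operator and is essentially routine; part (b) is the genuinely hard, rigidity part, and is where all the work lies.

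For (a) I would argue by duality. Writing $\lambda=N-\alpha$ and $Tf(x)=\int_{\mathbb{R}^N}|x-y|^{-\lambda}f(y)\,dy$, the bilinear estimate \eqref{HLS} is equivalent to the operator bound $T\colon L^t(\mathbb{R}^N)\to L^{r'}(\mathbb{R}^N)$, where $1/r'=1/t-\alpha/N$ (this is exactly the constraint $1/t+1/r=1+\alpha/N$ rewritten). The key observation is that the kernel $k(x)=|x|^{-\lambda}$ lies in the weak Lebesgue space $L^{N/\lambda,\infty}(\mathbb{R}^N)$, since its distribution function satisfies $|\{\,|k|>s\,\}|=c\,s^{-N/\lambda}$. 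I would then invoke the weak Young inequality: convolution with a weak-$L^d$ function, $d=N/\lambda$, maps $L^t$ boundedly into $L^q$ with $1/q=1/t+1/d-1=1/t-\alpha/N$, provided $t>1$ and $q<\infty$, which the hypotheses $t,r>1$ guarantee. This inequality is itself proved by splitting the kernel at each height into a bounded part and an integrable part, deducing weak-type bounds at the two relevant endpoints, and interpolating via the Marcinkiewicz theorem. This yields (a), but with a constant that is \emph{not} sharp.

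For the sharp constant and the equality case I would follow the rearrangement/conformal route of Lieb. First, by the Riesz rearrangement inequality the left-hand side of \eqref{HLS} does not decrease when $f$ and $h$ are replaced by their symmetric decreasing rearrangements $f^*,h^*$, while the $L^t,L^r$ norms are preserved; hence it suffices to maximize over radial, monotone functions, and any extremizer must (after translation) have this form. Specializing to $t=r=2N/(N+\alpha)$ the functional acquires a conformal invariance: under stereographic projection $\mathbb{R}^N\to S^N$ the problem transplants to one on the sphere that is invariant under the full conformal group, which restores the compactness lost to the dilation and translation symmetries on $\mathbb{R}^N$. A clean way to prove existence, identify the maximizers, and obtain sharpness simultaneously is the competing-symmetries argument of Carlen and Loss: one alternates the symmetric decreasing rearrangement with a conformal inversion and shows the iteration converges to a common fixed point, the only such fixed points being the conformal factors $(\gamma^2+|x-a|^2)^{-(N+\alpha)/2}$ (equivalently, constants on $S^N$), which are therefore the extremizers. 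Evaluating \eqref{HLS} on the representative $h(x)=(1+|x|^2)^{-(N+\alpha)/2}$ and computing the resulting beta-type integrals produces the explicit value \eqref{C_alpha}.

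The main obstacle is unquestionably the sharp case (b), and within it the \emph{existence} of a maximizer, which requires controlling the noncompactness generated by the three-parameter family of dilations, translations, and inversions: rearrangement handles translations and reduces to radial functions but does nothing about dilations, so one genuinely needs the passage to the sphere, or the competing-symmetries iteration, to close the argument. Once existence and the conformal symmetry are in hand, the classification of optimizers and the evaluation of $C(N,\alpha)$ are, by comparison, explicit computations. I would therefore concentrate the effort on setting up the stereographic transplantation carefully and verifying convergence of the iteration, and defer to Lieb's original computation for the final value of the constant.
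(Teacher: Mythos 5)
Your proposal cannot be compared against a proof in the paper for a simple reason: the paper gives none. Proposition \ref{prop1.1} is the classical sharp Hardy--Littlewood--Sobolev inequality of Lieb, quoted as a known preliminary and used as a black box throughout (e.g.\ to define $\mathcal{A}_q$ and the constant $C_\alpha$ in \eqref{S_alpha}); no proof is attempted or expected in this article. Your sketch is, however, a faithful reconstruction of the standard proof from the literature, with the labor correctly apportioned: the existence of \emph{some} finite constant via duality plus the weak Young inequality (the kernel $|x|^{-(N-\alpha)}$ lying in $L^{N/(N-\alpha),\infty}$, then Marcinkiewicz interpolation, with $t,r>1$ ensuring the endpoints are avoided) is indeed routine, while the sharp constant and equality case in the diagonal exponent $t=r=2N/(N+\alpha)$ require Riesz rearrangement, stereographic transplantation to $S^N$, and either Lieb's original compactness argument or the Carlen--Loss competing-symmetries iteration to defeat the dilation noncompactness; evaluating the functional on $(1+|x|^2)^{-(N+\alpha)/2}$ then yields exactly the value \eqref{C_alpha}. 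Two small remarks: your form of the extremizer $h(x)=A\bigl(\gamma^2+|x-a|^2\bigr)^{-(N+\alpha)/2}$ carries the correct negative exponent, whereas the paper's statement omits the minus sign (a typo, since the paper's function is not even in $L^r$); and in a full write-up the ``only if'' direction of the equality case needs the strict Riesz rearrangement inequality and the uniqueness of optimizers modulo the conformal group, which your outline gestures at but would have to spell out. As a blind proposal for this statement, it is correct in structure and identifies the genuinely hard step; it simply proves a theorem the paper deliberately imports rather than establishes.
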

	\noindent From this inequality, it follows that 
	\begin{align*}
		{\mathcal{A}_q(u):=	\int_{\mathbb R^N}\int_{\mathbb R^N}\frac{|u(x)|^{q}|u(y)|^{q}}{|x-y|^{N-\alpha}}~dxdy}
	\end{align*}
	is well defined if $\frac{N+\alpha}{N}\leq q \leq \frac{N+\alpha}{N-2}=2^*_{\alpha} $. The exponent $q = 2^*_{\alpha}$ is known as Hardy-Littlewood-Sobolev critical exponent and similar to the usual critical exponent, $H^1_0(\Omega)\ni\, u\mapsto \mathcal{A}_{2^*_\alpha}(u)$ is continuous for the norm topology but not for the weak topology (see \cite{Moroz2017guide}). Thus, the presence of this HLS critical exponent ($2^*_{\alpha}$) makes our problem challenging and intriguing to work on. Equations involving nonlinearity of the form $(I_{\alpha}*|u|^q)|u|^{q-2}u$ are called {\it Choquard equation}, as in 1976, Choquard, at the Symposium on Coulomb Systems  
	utilised the energy functional associated to equation 
	\begin{equation}
		\left\{ \begin{array}{rl}   	
			& 	-\Delta u  +  u = (I_2*|u|^2)u\;\;\text{in } \mathbb{R}^3,\\
			&  	u \in H^1(\mathbb{R}^3),
		\end{array}
		\right.
	\end{equation}
	to examine a viable approximation to Hartree-Fock theory for a one-component plasma (see \cite{lieb1977existence}). The equation has various other applications in quantum physics, for instance, it is used to characterise an electron confined within its own vacancy, see \cite{penrose1996gravity} and related sources. Several works have ever since conducted research on the existence, multiplicity, and qualitative characteristics of the solution to the problem 
	\begin{equation}\label{Norm_Choq}
		\begin{array}{rl}   	
			& 	-\Delta u  +  \lambda u =\mu (I_\alpha*|u|^p)|u|^{p-2}u\;\;\text{in } \mathbb{R}^N,
		\end{array}
	\end{equation}
	as detailed in \cite{filippucci2020singular, Moroz2013groundstates, liu2022another}. We are interested in discussing the multiplicity of normalized solutions to a critical growth Choquard equation involving mixed local $(\Delta)$ and non-local operator $(-\Delta)^s$. 
	
	The mixed operator $\mathcal{L}=-\Delta+(-\Delta)^s$, generally comes into the picture, whenever the impact on a physical phenomenon is due to both local and non-local changes. Some of its applications can be seen in bi-model power law distribution processes (see \cite{pagnini2021should}). A variety of contributions have examined issues related to the existence of solutions, their regularity and symmetry properties, Neumann problems, Green's function estimates and eigen values (see, for example, \cite{biagi2021global,biagi2022mixed,abatangelo2021elliptic,arora2021combined,Divya2019Eigenvalue}).


	The study of \eqref{prob} has physical relevance, as it provides us the standing wave solution for the nonlinear Schr$\ddot{\text{o}}$dinger (NLS) equation driven by mixed local and nonlocal operators given as follows:
	\begin{equation}\label{Schrodinger_equation}
		i \frac{\partial \psi}{\partial t} = -\Delta \psi+(-\Delta)^s\psi -\mu |\psi|^{p-2} \psi -(I_{\alpha}*|\psi|^{2^*_{\alpha}})|\psi|^{2^*_{\alpha}-2}\psi.
	\end{equation}
	A standing wave solution is of the form $\psi(x,t)=e^{-i\lambda t} u(x)$, where $\lambda\in \mathbb{R}$ and $u\in H^1(\mathbb{R}^N)$ solves:
	\begin{equation}\label{1.6}
		-\Delta u +(-\Delta)^s u  =  \lambda u +\mu |u|^{p-2}u +(I_{\alpha}*|u|^{2^*_{\alpha}})|u|^{2^*_{\alpha}-2}u \text{ in } \mathbb{R}^N.
	\end{equation}
	The additional $L^2-$norm constraint in \eqref{prob} gives us a standing wave with prescribed mass.
	While addressing solutions to \eqref{1.6}, there exists two schools of thought. The initial approach involves fixing a $\lambda\in \mathbb{R}$ and thereafter looking for the critical points of the associated energy functional, whereas the other method, that we are following here, is to fix the $L^2$-norm, that is, to search for the critical points of
	$$E(u):=\frac{\left\| \nabla u \right\|_2^2}{2}+\frac{[u]^2}{2}-\mu \frac{\left\| u \right\|_p^p}{p}-\frac{A(u)}{22^*_{\alpha}}; \text{ where } [u]^2=\int_{\mathbb{R}^N}\int_{\mathbb{R}^N}\frac{|u(x)-u(y)|^2}{|x-y|^{N+2s}}dxdy,$$
	restricted to the manifold $S(\tau):=\{u\in H^1(\mathbb{R}^N): \left\| u \right\|_2=\tau\}$, here $A(u)=\mathcal{A}_{2^*_{\alpha}}(u)$. The previous method has already been extensively employed, however the latter one is new and appears more captivating, in this case $\lambda$ playing the role of the Lagrange multiplier is also a part of the unknown and the solution thus found is called {\it normalized solution}. Recently, the study of normalized solutions has attracted the researchers, formally, the solution of the following constrained problem is called the normalized solution
	\begin{equation}\label{Norm_sol}
		\left\{ \begin{array}{rl}   	
			& 	-\Delta u  =  \lambda u +g(u)\;\;\text{in } \mathbb{R}^N,\\
			&  	\int_{\mathbb{R}^N}|u|^2dx  =  c.
		\end{array}
		\right.
	\end{equation} 
	Jeanjean in \cite{jeanjean1997existence}, demonstrated the existence of a radial solution for equation \eqref{Norm_sol} subject to certain assumptions on the function $g$. 
	Further, the existence of infinitely many solutions to \eqref{Norm_sol} with $c=1$ under same assumptions on $g$ has been shown by Bartsch and De Valeriola in \cite{bartsch2012normalized}.
	In \cite{noris2015existence}, Noris et. al. explored the normalized solutions in the context of bounded domains with Dirichlet boundary conditions. Normalized solutions have been seen to exist for $p$ values within the intervals $(1,1+\frac{4}{N})$, $(1+\frac{4}{N}, 2^*-1)$, and $p=1+\frac{4}{N}$, under certain requirements on $c$, with the domain being unit ball and $g(t)=|t|^{p-1}t$. Furthermore, the authors in \cite{pierotti2017normalized} have tackled the issue in general bounded domains.  The existence of normalized solutions of nonlinear Schrödinger systems has been extensively explored. Interested readers can refer to the references \cite{gou2018multiple, bartsch2016normalized, bartsch2018normalized, bartsch2019multiple, noris2014stable, noris2019normalized}. The study of quadratic ergodic mean field games system also investigates normalized solutions type, as discussed in \cite{pellacci2021normalized}.
	
	Let us formally initiate our study by discussing the variational framework of the problem \eqref{prob}.
	\begin{defi}\label{weak_sol}
		A function $u\in S(\tau)$ is said to be a solution to \eqref{prob} if it satisfies the following:
		\begin{equation}\label{sol}
			\int_{\mathbb{R}^N}\nabla u \nabla v +\ll u,v \gg = \lambda\int_{\mathbb{R}^N}uv+ \mu\int_{\mathbb{R}^N}|u|^{p-2}uv +\int_{\mathbb{R}^N}(I_{\alpha}*|u|^{2^*_{\alpha}})|u|^{2^*_{\alpha}-2}uv, 
		\end{equation}
		for all $v\in H^1(\mathbb{R}^N)$. Here $$\ll u, v \gg := \int_{\mathbb{R}^N}\int_{\mathbb{R}^N}\frac{(u(x)-u(y))(v(x)-v(y))}{|x-y|^{N+2s}}dxdy,$$
		and the space $H^1(\mathbb{R}^N)$ is equipped with the norm:
		$$\left\| u \right\| = \left(T(u)^2+\left\| u \right\|_2^2\right)^{\frac{1}{2}} \text{ where }T(u)^2=\left\| \nabla u \right\|_2^2+[u]^2.$$
	\end{defi}
	\noindent Using the Pohozaev identity, it is seen that a solution to \eqref{prob} lies on the Pohozaev Manifold 
	$$\mathcal{M}_{\tau}:=\{u\in S(\tau): M(u)=0\},$$ $$\text{ where }M(u)= \left\| \nabla u \right\|_2^2+s[u]^2-\mu\gamma_p\left\| u \right\|_p^p-A(u) \text{ with }\gamma_p:=\frac{N(p-2)}{2p}$$
	further using the fibre maps technique in section 2, we subdivided $\mathcal{M}_{\tau}$ into disjoint subsets $\mathcal{M}_{\tau}^+$ and $\mathcal{M}_{\tau}^-$. The idea is to look for distinct solutions in these disjoint subsets. \\
	Let $S$ be the  best constant corresponding to the imbedding $D^{1,2}(\mathbb{R}^N)\hookrightarrow L^{2^*}(\mathbb{R}^N)$. By \cite{shang2023normalized}, we know that
	\begin{equation}\label{S_alpha}
		S_{\alpha}=\inf_{u\in D^{1,2}(\mathbb{R}^N)\setminus \{0\}}\frac{\left\| \nabla u\right\|_2^2}{A(u)^{\frac{1}{2^*_{\alpha}}}}=\frac{S}{(A_{\alpha}C_{\alpha})^{\frac{1}{2^*_{\alpha}}}},
	\end{equation}
	and $S_{\alpha}$ is achieved by the family of functions of the form:
	\begin{equation}\label{U_epsilon}
		U_{\epsilon,x_0}(x)=\frac{(N(N-2)\epsilon^2)^{\frac{N-2}{4}}}{(\epsilon^2+|x-x_0|^2)^{\frac{N-2}{2}}},\text{ for } x_0\in \mathbb{R}^N \text{ and } \epsilon>0,
	\end{equation}
	here $A_{\alpha}=A_{N,\alpha}$ and $C_{\alpha}=C({N,\alpha})$ given in \eqref{A_alpha} and \eqref{C_alpha} respectively. Thanks to symmetric decreasing rearrangement, the Gagliardo-Nirenberg inequality (see \cite[Theorem 1.1]{Fiorenza2021detailed}) precisely, 
	\begin{equation}\label{G_N_inequality}
		\left\| u \right\|_\beta\leq C_{N,\beta}\left\| \nabla u \right\|_2^{\theta}\left\| u \right\|_2^{1-\theta} \text{ where } \theta=\frac{N(\beta-2)}{2\beta} \text{ for all }\beta\in [2,2^*],
	\end{equation}
	and compact imbedding $H_r(\mathbb{R}^N)\hookrightarrow L^q(\mathbb{R}^N)$ for all $q\in (2,2^*)$ (\cite[Lemma~3.1.4]{Badiale2010Semilinear}), by the Ekeland variational principle, we could deduce the existence of first solution. Taking 
	$$\tau_0=\left(\frac{p(2^*_{\alpha}-1)}{\mu C_{N,p}(22^*_{\alpha}-p\gamma_p)}\left(\frac{(2-p\gamma_p)2^*_{\alpha}S_{\alpha}^{2^*_{\alpha}}}{22^*_{\alpha}-p\gamma_p}\right)^{\frac{2-p\gamma_p}{2(2^*_{\alpha}-1)}}\right)^{\frac{1}{p(1-\gamma_p)}},$$
	and $$\tau_1=\left(\frac{2(2^*_{\alpha}-1)}{\gamma_p^{\frac{p\gamma_p}{2}}\mu C_{N,p}(22^*_{\alpha}-p\gamma_p)}\left(\frac{pS_{\alpha}^{\frac{2^*_{\alpha}}{2^*_{\alpha}-1}}}{2-p\gamma_p}\right)^{\frac{2-p\gamma_p}{2}}\right)^{\frac{1}{p(1-\gamma_p)}},$$
	we have the following:
	\begin{mytheorem}\label{Theorem 1}
		For $N\geq 3$, $s\in (0,1)$, $2<p<2+\frac{4s}{N}$ and $0<\tau<\min\{\tau_0,\tau_1\}$, there exists a radially symmetric function $u_{\tau}^+\in H^1(\mathbb{R}^N)$ that attains $m_{\tau}^+:=\displaystyle \inf_{u\in \mathcal{M}_{\tau}^+}E(u)$, that is, $E(u_{\tau}^+)=m_{\tau}^+<0$. Moreover, $u_{\tau}^+$ solves \eqref{prob} corresponding to some $\lambda_{\tau}^+<0$, for sufficiently large $\mu>0$.
	\end{mytheorem}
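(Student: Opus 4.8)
The plan is to obtain $u_\tau^+$ as a minimizer of $E$ on the set $\mathcal M_\tau^+$, exploiting the structure of the fibering maps and Ekeland's principle. For fixed $u\in S(\tau)$ I would study the dilation $u_t(x)=t^{N/2}u(tx)$, which preserves the $L^2$-norm, and the fibering map $\psi_u(t):=E(u_t)=\tfrac{t^2}{2}\|\nabla u\|_2^2+\tfrac{t^{2s}}{2}[u]^2-\tfrac{\mu t^{p\gamma_p}}{p}\|u\|_p^p-\tfrac{t^{2\cdot2^*_\alpha}}{2\cdot2^*_\alpha}A(u)$. The subcriticality hypothesis $p<2+\tfrac{4s}{N}$ is exactly $p\gamma_p<2s$, so the four exponents are ordered $p\gamma_p<2s<2<2\cdot2^*_\alpha$. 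Bounding $\|u\|_p^p$ by Gagliardo-Nirenberg \eqref{G_N_inequality} and $A(u)\le S_\alpha^{-2^*_\alpha}T(u)^{2\cdot2^*_\alpha}$ by \eqref{S_alpha}, the condition $\tau<\tau_0$ forces $\psi_u$ to have exactly two critical points $0<t^+(u)<t^-(u)$, a strict local minimum at $t^+(u)$ with $\psi_u(t^+(u))<0$ and a local maximum at $t^-(u)$; this is the splitting $\mathcal M_\tau=\mathcal M_\tau^+\cup\mathcal M_\tau^-$ of Section 2. In particular $E<0$ on $\mathcal M_\tau^+$, so $m_\tau^+<0$.

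Next I would prove coercivity. Using $M(u)=0$ to eliminate $A(u)$ rewrites the energy on $\mathcal M_\tau$ as $E(u)=\bigl(\tfrac12-\tfrac{1}{2\cdot2^*_\alpha}\bigr)\|\nabla u\|_2^2+\bigl(\tfrac12-\tfrac{s}{2\cdot2^*_\alpha}\bigr)[u]^2-\mu\bigl(\tfrac1p-\tfrac{\gamma_p}{2\cdot2^*_\alpha}\bigr)\|u\|_p^p$, where both quadratic coefficients are positive ($2^*_\alpha>1$, $s<1$) while the last term is, by Gagliardo-Nirenberg, $O\bigl(\tau^{p(1-\gamma_p)}\|\nabla u\|_2^{p\gamma_p}\bigr)$ with $p\gamma_p<2$. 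Hence $E$ is coercive and bounded below on $\mathcal M_\tau$, so minimizing sequences for $m_\tau^+$ are bounded in $H^1(\mathbb R^N)$. Replacing each element by its symmetric decreasing rearrangement (which fixes $\|\cdot\|_2,\|\cdot\|_p$, does not increase $\|\nabla\cdot\|_2^2$ or $[\cdot]^2$, and does not decrease $A$ by the Riesz rearrangement inequality, so that $\psi_{u^*}(t)\le\psi_u(t)$ for every $t>0$) and re-projecting onto $\mathcal M_\tau^+$ via the fibering map, I may take the minimizing sequence $\{u_n\}$ radial. Ekeland's variational principle on the constraint then yields a bounded radial Palais-Smale sequence: $E(u_n)\to m_\tau^+$ and $E'(u_n)-\lambda_n u_n\to0$ in $H^{-1}(\mathbb R^N)$ with $\{\lambda_n\}$ bounded.

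The crux is compactness. Along a subsequence $u_n\rightharpoonup u$ in $H^1_r(\mathbb R^N)$, and the compact embedding $H^1_r(\mathbb R^N)\hookrightarrow L^p(\mathbb R^N)$ (valid since $2<p<2^*$) gives $u_n\to u$ in $L^p$; consequently $u\neq0$, since otherwise $\|u_n\|_p\to0$ and the reduced expression above would force $\liminf E(u_n)\ge0>m_\tau^+$. The delicate term is the HLS-critical $A$, which is only weakly lower semicontinuous. Setting $v_n=u_n-u\rightharpoonup0$ and invoking the Brezis-Lieb splittings $\|\nabla u_n\|_2^2=\|\nabla u\|_2^2+\|\nabla v_n\|_2^2+o(1)$, $[u_n]^2=[u]^2+[v_n]^2+o(1)$, together with the nonlocal Brezis-Lieb lemma $A(u_n)=A(u)+A(v_n)+o(1)$, I would pass to the limit in $M(u_n)=0$ and in the Palais-Smale relation. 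If $A(v_n)\not\to0$, then the residual $v_n$ asymptotically satisfies $\|\nabla v_n\|_2^2+s[v_n]^2\simeq A(v_n)\le S_\alpha^{-2^*_\alpha}\|\nabla v_n\|_2^{2\cdot2^*_\alpha}$, which bounds $\|\nabla v_n\|_2^2$ below and hence contributes at least a fixed positive energy expressed through a power of $S_\alpha$; the hypothesis $\tau<\tau_1$ is precisely what places $m_\tau^+$ below this threshold, a contradiction. Thus $v_n\to0$, $u_n\to u$ strongly in $H^1(\mathbb R^N)$, and $u\in\mathcal M_\tau^+$ is a radial minimizer with $E(u)=m_\tau^+$.

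Finally I would set $u_\tau^+:=u$ and $\lambda_\tau^+:=\lim\lambda_n$, so that $u_\tau^+$ solves \eqref{prob}. To fix the sign, test the equation against $u_\tau^+$, giving $T(u_\tau^+)^2=\lambda_\tau^+\tau^2+\mu\|u_\tau^+\|_p^p+A(u_\tau^+)$, and combine it with the Pohozaev identity $\|\nabla u_\tau^+\|_2^2+s[u_\tau^+]^2=\mu\gamma_p\|u_\tau^+\|_p^p+A(u_\tau^+)$; subtracting yields $\lambda_\tau^+\tau^2=(1-s)[u_\tau^+]^2-\mu(1-\gamma_p)\|u_\tau^+\|_p^p$. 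Since $u_\tau^+\in\mathcal M_\tau^+$, the local-minimum condition $\psi_{u_\tau^+}''(1)>0$ forces the $L^p$ contribution to dominate the fractional Dirichlet term, and a careful estimate using this together with the smallness of $\tau$ shows the right-hand side is negative once $\mu$ is large enough, so $\lambda_\tau^+<0$. I expect the principal obstacle to be the compactness step above: the HLS-critical Choquard term must be controlled by orchestrating the explicit threshold $\tau_1$, the Brezis-Lieb decomposition, and the negativity $\lambda_\tau^+<0$ (which restores coercivity of $-\Delta+(-\Delta)^s-\lambda_\tau^+$), while the mixed local/nonlocal energy additionally requires a Polya-Szego inequality valid simultaneously for $\|\nabla\cdot\|_2^2$ and the seminorm $[\cdot]^2$.
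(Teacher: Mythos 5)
Your skeleton is essentially the paper's: fiber-map splitting of the Pohozaev manifold, rearrangement plus Ekeland to produce a radial Palais--Smale sequence, the compact embedding of $H_r(\mathbb{R}^N)$ into $L^p(\mathbb{R}^N)$, a nonlocal Brezis--Lieb splitting of $A$, an $S_{\alpha}$-threshold dichotomy, and the same multiplier identity $\lambda\tau^2=(1-s)[u]^2-\mu(1-\gamma_p)\left\| u \right\|_p^p$. However, the dichotomy step, as you state it, does not close. The splitting gives $m_{\tau}^+=\lim_n E(u_n)\geq E(u_0)+\left(\frac{2^*_{\alpha}-1}{2\cdot 2^*_{\alpha}}\right)S_{\alpha}^{\frac{2^*_{\alpha}}{2^*_{\alpha}-1}}$, where $u_0$ is the weak limit, and your contradiction --- ``the hypothesis $\tau<\tau_1$ is precisely what places $m_{\tau}^+$ below this threshold'' --- is vacuous: $m_{\tau}^+<0$ lies below the positive threshold for every $\tau$, and the displayed inequality yields nothing by itself because $E(u_0)$ may be negative (a priori comparably negative to $m_{\tau}^+$). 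The missing ingredient is a quantitative lower bound on $E(u_0)$: one must first show $M(u_0)=0$ (by proving that $u_0$ solves the limit equation and invoking the Pohozaev identity), then write $E(u_0)=E(u_0)-\frac{M(u_0)}{2\cdot 2^*_{\alpha}}$, apply Gagliardo--Nirenberg \eqref{G_N_inequality} with $\left\| u_0\right\|_2\leq\tau$, and conclude $E(u_0)\geq f(T(u_0))\geq f(t_0)$, where $f(t)=\left(\frac{2^*_{\alpha}-1}{2\cdot 2^*_{\alpha}}\right)t^2-\mu\left(\frac{2\cdot 2^*_{\alpha}-p\gamma_p}{2\cdot 2^*_{\alpha}p}\right)C_{N,p}\,t^{p\gamma_p}\tau^{p(1-\gamma_p)}$ and $t_0$ is its global minimum point. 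The hypothesis $\tau<\tau_1$ enters exactly here: it guarantees $f(t_0)+\left(\frac{2^*_{\alpha}-1}{2\cdot 2^*_{\alpha}}\right)S_{\alpha}^{\frac{2^*_{\alpha}}{2^*_{\alpha}-1}}>0$, which contradicts $m_{\tau}^+<0$; without this bound the argument fails.

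Two further points, on ordering and on the multiplier. Killing the bubble only gives $\left\|\nabla v_n\right\|_2^2+s[v_n]^2\to 0$, $A(v_n)\to0$ and $\left\| v_n\right\|_p\to0$; it does not prevent loss of $L^2$-mass at infinity, so ``$u_n\to u_0$ strongly in $H^1$ and $u_0\in\mathcal{M}_{\tau}^+$'' is not yet justified. The paper recovers the constraint by testing the Lagrange-multiplier relation with $u_0$, obtaining $\lambda_0\left\| u_0\right\|_2^2=\lambda_0\tau^2$ and dividing by $\lambda_0\neq0$; this forces the sign (or at least non-vanishing) of $\lambda_0$ to be settled \emph{before} the compactness step, whereas you derive it last --- and your mechanism for it is inconclusive anyway. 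Membership in $\mathcal{M}_{\tau}^+$ together with $M(u)=0$ yields only $s(2\cdot2^*_{\alpha}-2s)[u]^2<\mu\gamma_p(2\cdot2^*_{\alpha}-p\gamma_p)\left\| u\right\|_p^p$, a bound in which both sides carry one factor of $\mu$, so enlarging $\mu$ does not by itself make $(1-s)[u]^2-\mu(1-\gamma_p)\left\| u\right\|_p^p$ negative. (The paper's own justification of $\lambda_0<0$ for large $\mu$ is admittedly terse, but its order of logic --- boundedness and sign of $\lambda_n$ first, then non-vanishing of $u_0$, then strong convergence --- is what allows the proof to close.)
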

	\noindent Since our problem involves mass subcritical perturbation, $2<q<2+\frac{4s}{N}$, \cite{Jeanjean2022multiple} motivates us to expect a second solution.
	Denoting $m_{\tau}^-=\displaystyle \inf_{u\in \mathcal{M}_{\tau}^-}E(u)$, 
	in section 4 we deduced a relation between $m_{\tau}^+$ and $m_{\tau}^-$, that helped us to prove the existence of the second solution to \eqref{prob}. Precisely, we have the following result:
	\begin{mytheorem}\label{Theorem 2}
		Let $N\geq 3$, $2<p<2+\frac{4s}{N}$, $0<\tau<\min\{\tau_0,\tau_1\}$ and $\mu>0$ be sufficiently large, then $m_{\tau}^-$ is achieved by a radially symmetric function $u_{\tau}^-\in H^1(\mathbb{R}^N)$. Furthermore, $u_{\tau}^-$ solves \eqref{prob} corresponding to some $\lambda_{\tau}^-<0$.
	\end{mytheorem}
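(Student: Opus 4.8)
The plan is to realise $m_\tau^-$ as a mountain-pass level for the fibre maps of Section 2 and then to prove compactness of a minimising sequence below a sharp critical threshold. Writing $t\star u(x)=t^{N/2}u(tx)$ for the $L^2$-preserving dilation and $\Psi_u(t)=E(t\star u)$, the four powers of $t$ occurring in $\Psi_u$ are $t^{p\gamma_p},t^{2s},t^2,t^{2\cdot 2^*_\alpha}$ with $p\gamma_p<2s<2<2\cdot2^*_\alpha$ (because $2<p<2+\tfrac{4s}{N}$), the coefficients of the two extreme powers being negative and those of $t^{2s},t^2$ positive. Consequently, for $\tau<\min\{\tau_0,\tau_1\}$ and $A(u)>0$, the map $\Psi_u$ decreases from $0^-$, reaches a negative local minimum at some $t^+(u)$ (so $t^+(u)\star u\in\mathcal{M}_\tau^+$) and a local maximum at $t^-(u)>t^+(u)$ (so $t^-(u)\star u\in\mathcal{M}_\tau^-$), and then tends to $-\infty$. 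I would first check that $m_\tau^->0$ and that $\mathcal{M}_\tau^-$ is a natural constraint (from $\Psi_u''(1)<0$ one has $M'(u)\neq0$, so a constrained critical point of $E|_{\mathcal{M}_\tau^-}$ solves \eqref{prob}), and record the min-max identity $m_\tau^-=\inf_{u\in S(\tau)}\max_{t>0}E(t\star u)$. The whole proof then rests on the strict estimate $m_\tau^-<m_\tau^++c^*_\alpha$, where $c^*_\alpha:=\frac{\alpha+2}{2(N+\alpha)}S_\alpha^{\frac{N+\alpha}{\alpha+2}}$ is the least energy of the limiting critical Choquard problem $-\Delta u=(I_\alpha*|u|^{2^*_\alpha})|u|^{2^*_\alpha-2}u$.

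Next I would set up the compactness analysis. Since $t\star$ commutes with symmetric-decreasing rearrangement, and rearrangement lowers $T(u)$, leaves $\|u\|_p$ and $\|u\|_2$ unchanged and raises $A(u)$, it does not increase $E(t\star u)$ for any $t$; hence the min-max level is unchanged if computed over $H^1_r(\mathbb{R}^N)$, and I may work in the radial class. Applying Ekeland's variational principle to the min-max level yields a Palais-Smale sequence $\{u_n\}\subset\mathcal{M}_\tau^-\cap H^1_r(\mathbb{R}^N)$ with $E(u_n)\to m_\tau^-$, $M(u_n)\to0$ and $E'|_{S(\tau)}(u_n)\to0$. Boundedness in $H^1(\mathbb{R}^N)$ follows from $M(u_n)\to0$, the bound on $E(u_n)$ and the Gagliardo-Nirenberg inequality \eqref{G_N_inequality}, the mass-subcriticality $p\gamma_p<2$ keeping the perturbation subordinate. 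Writing $u_n\rightharpoonup u_\tau^-$, the compact embedding $H^1_r\hookrightarrow L^p$ for $2<p<2^*$ gives $u_n\to u_\tau^-$ in $L^p$. The Lagrange multipliers $\lambda_n$ satisfy, by the Nehari identity $T(u_n)^2=\lambda_n\tau^2+\mu\|u_n\|_p^p+A(u_n)$ (test \eqref{sol} with $u_n$) together with $M(u_n)=o(1)$, the relation $\lambda_n\tau^2=(1-s)[u_n]^2-\mu(1-\gamma_p)\|u_n\|_p^p+o(1)$; since $\gamma_p<1$ and $\|u_n\|_p^p$ is bounded below on $\mathcal{M}_\tau^-$, this forces $\lambda_n\to\lambda_\tau^-<0$ once $\mu$ is large.

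To conclude I would rule out loss of compactness at the HLS-critical exponent. Put $v_n=u_n-u_\tau^-$; by Brezis-Lieb and its nonlocal counterpart, $A(u_n)=A(u_\tau^-)+A(v_n)+o(1)$ and the quadratic forms split, while the subcritical and $L^2$ contributions of $v_n$ are handled by radial compactness and the concentration structure of the residual. A concentration analysis then shows that $v_n$ is a (possibly trivial) concentrating critical bubble; if it is nontrivial, the Sobolev-type inequality defining $S_\alpha$ forces $A(v_n)\ge S_\alpha^{\frac{2^*_\alpha}{2^*_\alpha-1}}+o(1)$, so the residual energy is at least $c^*_\alpha$, whence $m_\tau^-\ge E(u_\tau^-)+c^*_\alpha\ge m_\tau^++c^*_\alpha$, contradicting the strict estimate. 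Therefore $v_n\to0$ strongly, $u_\tau^-\in\mathcal{M}_\tau^-$ attains $m_\tau^-$ and solves \eqref{prob} with $\lambda_\tau^-<0$; being in $\mathcal{M}_\tau^-$ with $E(u_\tau^-)=m_\tau^->0$ it is distinct from the solution $u_\tau^+$ of Theorem \ref{Theorem 1}, which lies in $\mathcal{M}_\tau^+$ with negative energy.

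The main obstacle is the strict energy estimate $m_\tau^-<m_\tau^++c^*_\alpha$. I would prove it by testing the min-max with functions built from the first solution $u_\tau^+$ and a rescaled extremal $U_{\epsilon,x_0}$ from \eqref{U_epsilon} concentrating far from the bulk of $u_\tau^+$, projected back onto $\mathcal{M}_\tau^-$ along the fibre. This forces sharp $\epsilon\to0$ expansions of $\|\nabla U_\epsilon\|_2^2$ and $A(U_\epsilon)$, of the nonlocal cross interaction between $u_\tau^+$ and the bubble, and crucially of the extra fractional seminorm, which scales as $[U_\epsilon]^2=O(\epsilon^{2-2s})$ and must be shown to be of strictly lower order than the negative contribution produced by the mass-subcritical perturbation $\mu\|u\|_p^p$. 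Balancing these competing powers of $\epsilon$, and exploiting that $\mu$ is large (as in Theorem \ref{Theorem 1}), to land strictly below $m_\tau^++c^*_\alpha$ is the delicate computational heart of the argument.
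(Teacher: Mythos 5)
Your route is essentially the paper's own: reduction to the radial class by symmetric rearrangement, an Ekeland Palais--Smale sequence at the level $m_{\tau}^-$, the strict threshold $m_{\tau}^-<m_{\tau}^{+}+\bigl(\tfrac{2^*_{\alpha}-1}{2\cdot 2^*_{\alpha}}\bigr)S_{\alpha}^{2^*_{\alpha}/(2^*_{\alpha}-1)}$ (your $c^*_{\alpha}$ is exactly this constant, and your test family $u_{\tau}^{+}+t\,u_{\epsilon}$ projected onto $\mathcal{M}_{\tau}^-$ is exactly the construction of Lemma~\ref{Lemma 4.1}), Brezis--Lieb splitting against the threshold, and a negative Lagrange multiplier for $\mu$ large.

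The genuine gap is in your compactness dichotomy. You assert that once the concentrating bubble is excluded by the threshold, $v_n=u_n-u_{\tau}^-\to0$ strongly, the ``$L^2$ contributions'' being ``handled by radial compactness.'' That is false: the Strauss embedding $H_r(\mathbb{R}^N)\hookrightarrow L^q(\mathbb{R}^N)$ is compact only for $2<q<2^*$, not at $q=2$. A radial Palais--Smale sequence can lose mass by spreading out, i.e. $\|v_n\|_2^2\to\tau^2-r_0^2>0$ while $\|\nabla v_n\|_2$, $[v_n]$, $\|v_n\|_p$ and $A(v_n)$ all tend to $0$; such a loss costs no energy, is invisible to $M(\cdot)$, and creates no bubble, so neither your threshold argument nor the Pohozaev constraint touches it, yet it leaves a weak limit with $\|u_{\tau}^-\|_2=r_0<\tau$, outside $S(\tau)$. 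This is precisely the scenario the paper must treat separately (Case 2, Subcase 1 of its proof of Theorem~\ref{Theorem 2}), and the tool it uses there is one you never introduce: Lemma~\ref{Lemma 4.2}, the strict decrease of the level $\beta\mapsto m_{\beta}^-$ with respect to the mass for $\mu$ large, which shows that a limit of mass $r_0<\tau$ would force $m_{r_0}^-=m_{\tau}^-$ and simultaneously $m_{r_0}^->m_{\tau}^-$, a contradiction. Alternatively, since you do obtain $\lambda_n\to\lambda<0$, you could recover the missing mass by testing the equations satisfied by $u_n$ and by the weak limit against $v_n$ and invoking coercivity of $u\mapsto T(u)^2-\lambda\|u\|_2^2$ --- this is how the paper regains $\|u_0\|_2=\tau$ in the proof of Theorem~\ref{Theorem 1} --- but you never make that connection, and the justification you actually give (radial compactness in $L^2$) is wrong. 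Until one of these two mechanisms is supplied, your argument does not exclude a limit of mass strictly less than $\tau$, and the theorem is not proved.
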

		\section{Preliminaries}
		\noindent In this section, we will establish the necessary groundwork required to deduce the final existence results.
		\begin{lemma}\label{Lemma 2.1}
			If $u\in S(\tau)$ is a solution of \eqref{prob}, corresponding to some $\lambda \in \mathbb{R}$, then 
			$u \in \mathcal{M}_{\tau}.$
		\end{lemma}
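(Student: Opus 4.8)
The plan is to derive two scalar identities from the equation and eliminate the Lagrange multiplier $\lambda$ between them; the resulting combination is exactly $M(u)$.

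First I would record the Nehari-type identity. Since $u$ satisfies \eqref{sol} for every test function, the choice $v=u$ together with $\ll u,u\gg=[u]^2$, the relation $\int_{\mathbb{R}^N}(I_{\alpha}*|u|^{2^*_{\alpha}})|u|^{2^*_{\alpha}}\,dx=A(u)$, and $\|u\|_2=\tau$ gives
\begin{equation*}
	\|\nabla u\|_2^2+[u]^2=\lambda\tau^2+\mu\|u\|_p^p+A(u). \tag{N}
\end{equation*}
Observe that \eqref{sol} says precisely that $u$ is a free critical point (for the fixed $\lambda$) of the unconstrained action $J(u)=\tfrac12\|\nabla u\|_2^2+\tfrac12[u]^2-\tfrac{\lambda}{2}\|u\|_2^2-\tfrac{\mu}{p}\|u\|_p^p-\tfrac{1}{2\cdot 2^*_{\alpha}}A(u)$, a fact I will use for the second identity.

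Second, and this is the crux, I would establish the Pohozaev identity for the mixed operator by pairing the equation with $x\cdot\nabla u$, equivalently by differentiating $J$ along the dilation $u_{\sigma}(x)=u(x/\sigma)$ and using $J'(u)=0$. The relevant scaling exponents are
\begin{equation*}
	\|\nabla u_{\sigma}\|_2^2=\sigma^{N-2}\|\nabla u\|_2^2,\quad [u_{\sigma}]^2=\sigma^{N-2s}[u]^2,\quad \|u_{\sigma}\|_2^2=\sigma^{N}\|u\|_2^2,
\end{equation*}
\begin{equation*}
	\|u_{\sigma}\|_p^p=\sigma^{N}\|u\|_p^p,\qquad A(u_{\sigma})=\sigma^{N+\alpha}A(u),
\end{equation*}
the last following from the homogeneity of the Riesz kernel. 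Differentiating $J(u_{\sigma})$ at $\sigma=1$ and using $\|u\|_2=\tau$ yields
\begin{equation*}
	\tfrac{N-2}{2}\|\nabla u\|_2^2+\tfrac{N-2s}{2}[u]^2=\tfrac{N}{2}\lambda\tau^2+\tfrac{N}{p}\mu\|u\|_p^p+\tfrac{N+\alpha}{2\cdot 2^*_{\alpha}}A(u). \tag{P}
\end{equation*}

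Finally I would eliminate $\lambda$: forming $\tfrac{N}{2}$ times the Nehari identity $(\mathrm{N})$ minus the Pohozaev identity $(\mathrm{P})$ cancels the $\lambda\tau^2$ terms and leaves
\begin{equation*}
	\|\nabla u\|_2^2+s[u]^2=\mu\,\gamma_p\|u\|_p^p+A(u),
\end{equation*}
where I have used $N\left(\tfrac12-\tfrac1p\right)=\gamma_p$ for the perturbation coefficient and the identity $\tfrac{N+\alpha}{2\cdot 2^*_{\alpha}}=\tfrac{N-2}{2}$ (immediate from $2^*_{\alpha}=\frac{N+\alpha}{N-2}$), which makes the Choquard coefficient on the right collapse to $1$. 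This is precisely $M(u)=0$, hence $u\in\mathcal{M}_{\tau}$. The main obstacle is the rigorous justification of $(\mathrm{P})$: for the nonlocal term $(-\Delta)^s$ the pairing with $x\cdot\nabla u$ (equivalently, the admissibility of $x\cdot\nabla u$ as a test direction in $J'(u)=0$) requires sufficient regularity and decay of $u$ to carry out the fractional Pohozaev computation and to discard boundary-type contributions, in the spirit of the Ros-Oton--Serra framework; in the mixed setting this must be combined with the classical local Pohozaev identity. I would therefore first invoke the requisite regularity for solutions of \eqref{prob} and cite (or establish) the Pohozaev identity for $-\Delta+(-\Delta)^s$, after which the algebraic elimination above is routine.
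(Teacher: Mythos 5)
Your proposal is correct and follows essentially the same route as the paper: test \eqref{sol} with $v=u$ to get the Nehari-type identity, invoke the Pohozaev identity for the mixed operator $-\Delta+(-\Delta)^s$ (which the paper, like you suggest, does not re-derive but cites from the literature), and eliminate $\lambda$ to obtain $M(u)=0$. Your explicit algebraic elimination, including the identities $N\left(\tfrac12-\tfrac1p\right)=\gamma_p$ and $\tfrac{N+\alpha}{2\cdot 2^*_{\alpha}}=\tfrac{N-2}{2}$, matches the paper's computation, and your caveat that the dilation argument alone is not rigorous and requires citing or establishing the mixed Pohozaev identity is exactly how the paper handles that step.
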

		\begin{proof}
			Since, $u\in S(\tau)$ solves \eqref{prob}, for some $\lambda\in \mathbb{R}$, then we have:
			\begin{equation}\label{2.1}
				\lambda \left\| u \right\|_2^2 = \left\| u \right\|_2^2+[u]^2-\mu\left\| u \right\|_p^p-A(u),
			\end{equation}
			also, 
			$u$ satisfies the following Pohozaev identity:
			\begin{equation}\label{2.2}
				\left(\frac{N-2}{2}\right)\left\| \nabla u \right\|_2^2 +\left(\frac{N-2s}{2}\right) [u]^2= \frac{N\lambda}{2}\left\| u \right\|_2^2+\frac{N}{p}\mu\left\| u \right\|_p^p+\left(\frac{N+\alpha}{22^*_{\alpha}}\right)A(u),
			\end{equation}
			see \cite[Theorem~A1]{Giacomoni2025Normalized} and \cite[Theorem~2.5]{Anthal2025Pohozaev}.
			Using \eqref{2.1} in \eqref{2.2}, we get
			$$M(u)= \left\| \nabla u \right\|_2^2+s[u]^2-\mu\gamma_p\left\| u \right\|_p^p-A(u)=0,$$
			where $\gamma_p= \frac{N(p-2)}{2p}$.
		\end{proof}
		\noindent This Pohozaev manifold $\mathcal{M}_{\tau}$, will be playing a crucial role in the study of existence and multiplicity results. We will further subdivide it into following three disjoint subsets:
		$$\mathcal{M}_{\tau}^0:= \{u\in \mathcal{M}_{\tau} : 2 \left\| \nabla u \right\|_2^2+2s^2[u]^2=p\gamma_p^2\mu\left\| u \right\|_p^p+2.2^*_{\alpha}A(u)\},$$
		$$\mathcal{M}_{\tau}^+:= \{u\in \mathcal{M}_{\tau} : 2 \left\| \nabla u \right\|_2^2+2s^2[u]^2>p\gamma_p^2\mu\left\| u \right\|_p^p+2.2^*_{\alpha}A(u)\},$$			$$\mathcal{M}_{\tau}^-:= \{u\in \mathcal{M}_{\tau} : 2 \left\| \nabla u \right\|_2^2+2s^2[u]^2<p\gamma_p^2\mu\left\| u \right\|_p^p+2.2^*_{\alpha}A(u)\},$$
		and deduce the existence of a solution in $\mathcal{M}_{\tau}^+$ and another one in $\mathcal{M}_{\tau}^-$. As we move forward, it will become clearer why $\mathcal{M}_{\tau}^+$, $\mathcal{M}_{\tau}^-$, and $\mathcal{M}_{\tau}^0$ were chosen in this way. Now, for any $u\in S(\tau)$, by \eqref{S_alpha} and Gagliardo-Nirenberg inequality \eqref{G_N_inequality}
		we have:
		\begin{equation}\label{2.3}
			E(u)  =  \frac{T(u)^2}{2}-\mu\frac{\left\| u \right\|_p^p}{p} -\frac{A(u)}{22^*_{\alpha}}
			\geq  \frac{T(u)^2}{2}-\frac{\mu C_{N,p}}{p}T(u)^{p\gamma_p}\tau^{p-p\gamma_p}-\frac{T(u)^{22^*_{\alpha}}}{22^*_{\alpha}S_{\alpha}^{2^*_{\alpha}}}.
		\end{equation}
		Defining,
		$$h(t):=\frac{t^2}{2}-\frac{\mu C_{N,p}t^{p\gamma_p}\tau^{p-p\gamma_p}}{p}-\frac{t^{22^*_{\alpha}}}{22^*_{\alpha}S_{\alpha}^{2^*_{\alpha}}} \text{ for all } t>0,$$
		we get $E(u)\geq h(T(u))$. Let us discuss some properties of the function $h$, that will be helpful for us.
		\begin{lemma}\label{Lemma 2.2}
			There exists $\tau_0>0$, such that
			for $\tau<\tau_0$, $h$ has a strict local minimum at negative level, a global maximum at  positive level and, we can find two positive constants $R_1>R_0$ such that $h(R_0)=0=h(R_1)$ with $h(t)>0$ if and only if $t\in (R_0,R_1)$.
		\end{lemma}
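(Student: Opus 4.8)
The plan is to treat $h$ as a one–variable function on $(0,\infty)$ and read its graph off from the ordering of the three exponents $p\gamma_p$, $2$ and $22^*_\alpha$. Since $p\gamma_p=\tfrac{N(p-2)}{2}$, the hypotheses $2<p<2+\tfrac{4s}{N}$ and $s\in(0,1)$ give $0<p\gamma_p<2s<2$, whereas $22^*_\alpha=\tfrac{2(N+\alpha)}{N-2}>2$; thus $p\gamma_p<2<22^*_\alpha$. Also $p<2^*$ forces $\gamma_p<1$, so $p(1-\gamma_p)>0$ and the coefficient $b:=\tfrac{\mu C_{N,p}}{p}\tau^{\,p-p\gamma_p}$ of the perturbation tends to $0$ as $\tau\to0^+$. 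The decisive algebraic step is the factorisation
\[
 h(t)=t^{\,p\gamma_p}\big(\phi(t)-b\big),\qquad
 \phi(t):=\tfrac12\,t^{\,2-p\gamma_p}-\tfrac{1}{22^*_\alpha S_\alpha^{2^*_\alpha}}\,t^{\,22^*_\alpha-p\gamma_p},
\]
which reduces every qualitative statement about $h$ to the single ``bump'' $\phi$.

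Because $0<2-p\gamma_p<22^*_\alpha-p\gamma_p$, the function $\phi$ satisfies $\phi(0^+)=0$, $\phi(t)\to-\infty$, and has a unique critical point $t^\ast>0$ which is a strict maximum; moreover $\phi(t^\ast)>0$ depends only on $N,\alpha,p,s$, not on $\mu,\tau$. I would next record the key equivalence $\max_{t>0}h(t)>0\iff \exists\,t:\phi(t)>b\iff b<\phi(t^\ast)$. Solving $b=\phi(t^\ast)$ explicitly — one computes $(t^\ast)^{22^*_\alpha-2}=\tfrac{(2-p\gamma_p)2^*_\alpha S_\alpha^{2^*_\alpha}}{22^*_\alpha-p\gamma_p}$ and then $\phi(t^\ast)=(t^\ast)^{2-p\gamma_p}\tfrac{2^*_\alpha-1}{22^*_\alpha-p\gamma_p}$ — and inverting the relation $\tfrac{\mu C_{N,p}}{p}\tau^{\,p-p\gamma_p}=\phi(t^\ast)$ reproduces exactly the threshold $\tau_0$ displayed before Theorem~\ref{Theorem 1}. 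Hence for $\tau<\tau_0$ we have $0<b<\phi(t^\ast)$, the single-bump $\phi$ meets the level $b$ at exactly two points $R_0<R_1$, with $\phi>b$ precisely on $(R_0,R_1)$, and via the factorisation $h(R_0)=h(R_1)=0$ with $h>0\iff t\in(R_0,R_1)$; in particular the maximum of $h$ is positive.

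It remains to produce the strict negative local minimum and to confirm the maximum is attained. Here I would differentiate and factor once more,
\[
 h'(t)=t^{\,p\gamma_p-1}\big(\Psi(t)-K\big),\qquad
 \Psi(t):=t^{\,2-p\gamma_p}-S_\alpha^{-2^*_\alpha}\,t^{\,22^*_\alpha-p\gamma_p},\quad K:=\mu C_{N,p}\gamma_p\tau^{\,p-p\gamma_p},
\]
and observe that $\Psi$ is again single-bumped. Since $\tau<\tau_0$ already forces $\max h>0$, while $h(0^+)=0$ and $h<0$ near $0$ (the bracket in the factorisation tends to $-b<0$), $h$ must increase somewhere, so $K<\max\Psi$; thus $\Psi=K$ has exactly two roots $t_1<t_2$, with $h'<0$ on $(0,t_1)$, $h'>0$ on $(t_1,t_2)$ and $h'<0$ on $(t_2,\infty)$. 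Consequently $t_1$ is a strict local minimum and $t_2$ is the unique, hence global, maximum (so the positive maximum is attained); and since $h$ is strictly decreasing on $(0,t_1)$ with $h(0^+)=0$, we obtain $h(t_1)<0$. One checks $t_1\in(0,R_0)$ and $t_2\in(R_0,R_1)$, so the whole graph matches the asserted profile.

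Assembling the three pieces gives the lemma for every $\tau<\tau_0$. There is no genuine analytic obstacle — each auxiliary function is a harmless difference of two powers with a single interior maximum — so the real work is the bookkeeping: computing $\phi(t^\ast)$ in closed form and inverting it to recognise the displayed constant $\tau_0$, and, conceptually, noticing that positivity of the maximum is \emph{equivalent} to the one smallness condition $b<\phi(t^\ast)$, which then delivers the two zeros and (through the resulting sign pattern of $h$ and $h'$) the negative local minimum simultaneously, so that a single threshold governs all three claims.
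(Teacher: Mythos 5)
Your proof is correct and takes essentially the same route as the paper: your factorisation $h(t)=t^{p\gamma_p}\bigl(\phi(t)-b\bigr)$ is exactly the paper's reduction $h(t)=t^{p\gamma_p}\bar{h}(t)$, and your $\Psi$ and level $K$ are precisely the paper's auxiliary function $g$ and constant $C_{\tau}$ in the analysis of $h'$. If anything, you are more explicit in the two places the paper is terse: you compute $\phi(t^{\ast})$ in closed form and invert it to recover the displayed threshold $\tau_0$ (the paper merely asserts $\bar{h}(t_0)>0$ for $\tau<\tau_0$), and you derive the sign pattern of $h'$ (negative--positive--negative) directly from the single-bump shape of $\Psi$, whereas the paper only proves ``at most two critical points'' by contradiction and then reads the geometry off a sketch.
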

		\begin{proof}
			Define $$\bar{h}(t):= \frac{t^{2-p\gamma_p}}{2}-\frac{\mu C_{N,p}}{p}\tau^{p(1-\gamma_p)}-\frac{t^{22^*_{\alpha}-p\gamma_p}}{22^*_{\alpha}S_{\alpha}^{2^*_{\alpha}}} \text{ for } t>0,$$
			then $h(t)=t^{p\gamma_p}\bar{h}(t)$, and hence $h(t)>0$ if and only if $\bar{h}(t)>0$. Clearly, since $\bar{h}$ has unique critical point $t_0=\left( \frac{(2-p\gamma_p)2^*_{\alpha}S_{\alpha}^{2^*_{\alpha}}}{22^*_{\alpha}-p\gamma_p}\right)^{\frac{1}{2(2^*_{\alpha}-1)}}$, $\bar{h}$ is increasing in $(0,t_0)$, decreasing in $(t_0,\infty)$, $\bar{h}(0)=-\frac{\mu C_{N,p}}{p}\tau^{p(1-\gamma_p)}$, and $\bar{h}(t_0)>0 \text{ for all } \tau<\tau_0,$
			it's curvature can be visualised as follows:
			\begin{center}
				\begin{tikzpicture}
					\draw[thick,->] (0,0)--(4,0);
					\draw[thick,<->](0,1)--(0,-1.5);
					\draw[thick,->](0,-1).. controls (2,2) .. (3,-1.5) node[anchor= south west]{$\bar{h}(t)$};
					\draw (2.9,0)--(2.9,0) node[anchor=north]{$R_1$};
					\draw (0.9,0)--(0.9,0) node[anchor=north]{$R_0$};
					\draw [dash dot] (1.8,1.1)--(1.8,0) node[anchor=north]{$t_0$};
					\draw (0,-1)--(0,-1) node[anchor=north east]{$\bar{h}(0)$};
				\end{tikzpicture}
			\end{center}
			Thus, there exists $0<R_0<R_1$ such that $h(R_0)=0=h(R_1)$ and $h(t)>0$ if and only if $t\in (R_0,R_1)$. 
			Next we claim that $h$ has exactly two non-zero critical points.
			Now, since 
			$$h'(t)=t^{p\gamma_p-1}\left(t^{2-p\gamma_p}-\mu\gamma_pC_{N,p}\tau^{p(1-\gamma_p)}-\frac{t^{22^*_{\alpha}-p\gamma_p}}{S_{\alpha}^{2^*_{\alpha}}}\right),$$
			if $h$ has more than two non-zero critical points, then the function $g$, defined as
			$$g(t):=t^{2-p\gamma_p}-\frac{t^{22^*_{\alpha}-p\gamma_p}}{S_{\alpha}^{2^*_{\alpha}}},$$
			attains $C_{\tau}=\mu \gamma_pC_{N,p}\tau^{p(1-\gamma_p)}$ atleast thrice and hence, has at least two critical points. But, since $\bar{t}=\left(\frac{(2-p\gamma_p)S_{\alpha}^{2^*_{\alpha}}}{22^*_{\alpha}-p\gamma_p}\right)^{\frac{1}{2(2^*_{\alpha}-1)}}$ is the unique critical point of $g$, we get a contradiction. Thus, $h$ has atmost two non-zero crtical points. Also, since $h(t)\rightarrow 0^-$ as $t\rightarrow 0^+$ and $h(t)\rightarrow -\infty$ as $t\rightarrow \infty$, $h$ can exhibit the following geometry:
			\begin{center}
				\begin{tikzpicture}
					\draw[thick,->](0,0)--(5,0);
					\draw[thick,<->] (0,1)--(0,-1);
					\draw (1.5,0)--(1.5,0) node[anchor=north]{$R_0$};
					\draw (3,0)--(3,0) node[anchor=north]{$R_1$};
					\draw[thick,->](0,0).. controls (1.5,-2) and (1,2.25) .. (4,-1) node[anchor= west]{$h(t)$};
				\end{tikzpicture}
			\end{center}
			Hence, we are done.
		\end{proof}
		\noindent For any $u\in H^1(\mathbb{R}^N)$, let us define the fiber maps $\star$ and $\circledast$, as follows:
		$$(t\star u)(x):=e^{\frac{Nt}{2}}u(e^tx)\text{ for } t\in \mathbb{R}; \text{ and } (t\circledast u)(x):=t^{\frac{N}{2}}u(tx) \text{ for } t\geq 0.$$
		Clearly, $e^t\circledast u= t\star u$. Now, defining $\psi_{u}(t):=E(t\star u)$, one can notice that $M(t\star u )=\psi_{u}'(t)$, also we have the following results about $\psi_{u}$.
		\begin{lemma}\label{Lemma 2.3}
			Let $u\in S(\tau)$ and $\tau <\tau_0$, then $\psi_{u}$ has exactly two zeroes and two critical points, that is, we can find unique $a_u<b_u<c_u<d_u$ such that $\psi_{u}'(a_u)=0=\psi_{u}'(c_u)$ and $\psi_{u}(b_u)=0=\psi_{u}(d_u)$. Moreover, we have the following:
			\begin{enumerate}
				\item $a_u\star u \in \mathcal{M}_{\tau}^+$ and $c_u\star u\in \mathcal{M}_{\tau}^-$. If $t\star u \in \mathcal{M}_{\tau}$, then either $t=a_u$ or $t=c_u$ and hence $\mathcal{M}_{\tau}^0$ is empty.
				\item $E(c_u\star u)=\max\{E(t\star u): t\in \mathbb{R}\}>0$ and $\psi_{u}$ is strictly decreasing in $(c_u,\infty)$.
				\item $T(t\star u)\leq R_0$ for every $t<b_u$ and $$E(a_u \star u)=\min\{E(t\star u): t\in \mathbb{R} \text{ and } T(t\star u)\leq R_0\}<0.$$
				\item The maps $\Phi_1:\mathcal{M}_{\tau}\rightarrow \mathbb{R}$ and $\Phi_2:\mathcal{M}_{\tau}\rightarrow \mathbb{R}$ defined as $\Phi_1(u):=a_u$ and $\Phi_2(u):=c_u$, are of class $C^1$.
			\end{enumerate}
		\end{lemma}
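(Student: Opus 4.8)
The plan is to reduce the entire statement to a one–variable analysis of $\psi_u(t)=E(t\star u)$. Using the change of variables $y=e^tx$ one first records the exact scaling of each term: $\|t\star u\|_2^2=\|u\|_2^2$, $\|\nabla(t\star u)\|_2^2=e^{2t}\|\nabla u\|_2^2$, $[t\star u]^2=e^{2st}[u]^2$, $\|t\star u\|_p^p=e^{p\gamma_pt}\|u\|_p^p$ and $A(t\star u)=e^{2\cdot 2^*_\alpha t}A(u)$. Writing $a=\|\nabla u\|_2^2$, $b=[u]^2$, $c=\mu\|u\|_p^p$, $d=A(u)$ (all strictly positive since $u\neq0$),
\[\psi_u(t)=\tfrac{a}{2}e^{2t}+\tfrac{b}{2}e^{2st}-\tfrac{c}{p}e^{p\gamma_pt}-\tfrac{d}{2\cdot 2^*_\alpha}e^{2\cdot 2^*_\alpha t},\]
\[\psi_u'(t)=ae^{2t}+sbe^{2st}-\gamma_pce^{p\gamma_pt}-de^{2\cdot 2^*_\alpha t}=M(t\star u),\]
and $\psi_u''(t)=2ae^{2t}+2s^2be^{2st}-p\gamma_p^2ce^{p\gamma_pt}-2\cdot 2^*_\alpha de^{2\cdot 2^*_\alpha t}$. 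The assumption $2<p<2+\tfrac{4s}{N}$ gives $0<p\gamma_p<2s$, while $s<1$ and $\alpha\in(0,N)$ give $2s<2<2\cdot 2^*_\alpha$, so the four exponents are strictly ordered $p\gamma_p<2s<2<2\cdot 2^*_\alpha$ and the coefficient sign pattern of $\psi_u'$ along this ordering is $(-,+,+,-)$. Note that $\psi_v''(0)$ is exactly the quantity defining $\mathcal{M}_\tau^\pm,\mathcal{M}_\tau^0$, and $\psi_v''(0)=\psi_u''(t)$ when $v=t\star u$.

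To count the critical points I factor $\psi_u'(t)=e^{p\gamma_pt}\bigl(F(t)-\gamma_pc\bigr)$ where $F(t):=ae^{(2-p\gamma_p)t}+sbe^{(2s-p\gamma_p)t}-de^{(2\cdot 2^*_\alpha-p\gamma_p)t}$, so $\psi_u'(t)=0\iff F(t)=\gamma_pc$. Now $F(t)\to0^+$ as $t\to-\infty$, $F(t)\to-\infty$ as $t\to+\infty$, and $F'$ carries the single–sign–change pattern $(+,+,-)$, so $F'$ vanishes at most once and, changing sign from $+$ to $-\infty$, exactly once; hence $F$ is strictly increasing then strictly decreasing and $F=\gamma_pc$ has at most two solutions. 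For the lower bound, since $t\mapsto T(t\star u)=(ae^{2t}+be^{2st})^{1/2}$ is a strictly increasing bijection of $\mathbb{R}$ onto $(0,\infty)$, I pick $t$ with $T(t\star u)\in(R_0,R_1)$, whence $\psi_u(t)\ge h(T(t\star u))>0$ by \eqref{2.3} and Lemma~\ref{Lemma 2.2}; as $\psi_u\to0^-$ at $-\infty$ and $\to-\infty$ at $+\infty$, this forces at least two critical points. Thus $\psi_u$ has exactly two critical points $a_u<c_u$, with $a_u$ a local minimum, $c_u$ the global maximum, $\psi_u(c_u)>0$, $\psi_u'>0$ on $(a_u,c_u)$ and $\psi_u'<0$ on $(-\infty,a_u)\cup(c_u,\infty)$ (this gives item~2 and the strict decrease past $c_u$). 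The two solutions $a_u<c_u$ flank the unique peak of $F$, so $F'(a_u)>0>F'(c_u)$; since $\psi_u''(t_i)=e^{p\gamma_pt_i}F'(t_i)$ at a critical point, we obtain $\psi_u''(a_u)>0$ and $\psi_u''(c_u)<0$, i.e. $a_u\star u\in\mathcal{M}_\tau^+$ and $c_u\star u\in\mathcal{M}_\tau^-$. Because $t\star u\in\mathcal{M}_\tau\iff\psi_u'(t)=0\iff t\in\{a_u,c_u\}$, and every $w\in\mathcal{M}_\tau$ equals $0\star w$ with $0\in\{a_w,c_w\}$, it follows that $\mathcal{M}_\tau^0=\emptyset$, which proves item~1.

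For item~3, the sign analysis shows $\psi_u<0$ on $(-\infty,b_u)$, $\psi_u>0$ on $(b_u,d_u)$, $\psi_u<0$ on $(d_u,\infty)$ for unique $b_u,d_u$ with $a_u<b_u<c_u<d_u$. Let $t_*<t^{**}$ satisfy $T(t_*\star u)=R_0$, $T(t^{**}\star u)=R_1$. On $(t_*,t^{**})$ one has $T(t\star u)\in(R_0,R_1)$, so $\psi_u\ge h(T(t\star u))>0$, giving $(t_*,t^{**})\subseteq(b_u,d_u)$ and in particular $b_u\le t_*$; monotonicity of $T(\cdot\star u)$ then yields $T(t\star u)<T(b_u\star u)\le R_0$ for every $t<b_u$. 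Since $\{t:T(t\star u)\le R_0\}=(-\infty,t_*]$ contains $a_u$ (as $a_u<b_u\le t_*$) and the endpoint value satisfies $\psi_u(t_*)\ge h(R_0)=0>\psi_u(a_u)$ while $\psi_u\to0^-$ at $-\infty$, the unique local minimum $a_u$ realizes the minimum of $\psi_u$ on this ray; strict decrease of $\psi_u$ on $(-\infty,a_u)$ from the limit $0^-$ gives $\psi_u(a_u)<0$, proving item~3.

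Finally, for item~4 I apply the implicit function theorem to $\mathcal{G}(t,u):=M(t\star u)=\psi_u'(t)$, which is $C^1$ on $\mathbb{R}\times(H^1(\mathbb{R}^N)\setminus\{0\})$. At $(a_u,u)$ we have $\mathcal{G}=0$ and $\partial_t\mathcal{G}(a_u,u)=\psi_u''(a_u)>0$, so $u\mapsto a_u$ is locally $C^1$; uniqueness of the two critical points makes the local branches consistent, so $\Phi_1\in C^1$, and identically $\Phi_2\in C^1$ using $\psi_u''(c_u)<0$. The main obstacle is the sharp counting of critical points together with the non-degeneracy $\psi_u''\neq0$ at them: this is exactly where the exponent ordering $p\gamma_p<2s<2<2\cdot 2^*_\alpha$ is indispensable, since it produces the unimodal reduction $F$, and where the smallness $\tau<\tau_0$ enters through Lemma~\ref{Lemma 2.2} to guarantee $\max\psi_u>0$; without this positivity $\psi_u$ could remain non-positive and the two nontrivial critical points, hence the splitting $\mathcal{M}_\tau=\mathcal{M}_\tau^+\cup\mathcal{M}_\tau^-$, would fail to exist.
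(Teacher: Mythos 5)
Your proposal is correct and follows essentially the same route as the paper: the same scaling identities, a factorization of $\psi_u'$ as an exponential times a three-term exponential sum whose monotonicity structure limits the number of critical points to two, the bound $\psi_u(t)\geq h(T(t\star u))>0$ for $T(t\star u)\in(R_0,R_1)$ (where $\tau<\tau_0$ enters) combined with the asymptotics $\psi_u\to 0^-$ and $\psi_u\to-\infty$ to produce the local minimum and global maximum, and the implicit function theorem for item 4 (the paper cites Han--Zhang for this step, you carry it out). A minor bonus of your variant: by factoring out $e^{p\gamma_p t}$ and exploiting the transversality $F'(a_u)>0>F'(c_u)$ around the unique peak of $F$, you obtain the strict signs $\psi_u''(a_u)>0$ and $\psi_u''(c_u)<0$ directly, whereas the paper infers them from ``$a_u$ is a strict local minimum,'' which by itself only yields $\psi_u''(a_u)\geq 0$.
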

		\begin{proof}
			Since,		
			$$\psi_{u}(t)= E(t\star u)= \frac{e^{2t}}{2}\left\| \nabla u \right\|_2^2+\frac{e^{2st}}{2}[u]^2-\frac{\mu e^{p\gamma_pt}}{p}\left\| u \right\|_p^p-\frac{e^{22^*_{\alpha}t}}{22^*_{\alpha}}A(u),$$
			we get $$\psi_{u}'(t)= e^{22^*_{\alpha}t}\left(e^{(2-22^*_{\alpha})t}\left\| \nabla u \right\|_2^2+se^{(2s-22^*_{\alpha})t}[u]^2-\gamma_p\mu e^{(p\gamma_p-22^*_{\alpha})t}\left\| u \right\|_p^p-A(u)\right).$$
			If $\psi_{u}$ has more than two critical points, then the function $g$ defined as:
			$$g(t):=e^{(2-22^*_{\alpha})t}\left\| \nabla u \right\|_2^2+se^{(2s-22^*_{\alpha})t}[u]^2-\gamma_p\mu e^{(p\gamma_p-22^*_{\alpha})t}\left\| u \right\|_p^p,$$
			attains $A(u)$ atleast thrice and hence has atleast two critical points. Now, since	$$g'(t)= e^{(p\gamma_p-22^*_{\alpha})t}(\bar{g}(t)-C_p)$$
			where $ \bar{g}(t)= (2-22^*_{\alpha})e^{(2-p\gamma_p)t}\left\| \nabla u \right\|_2^2+s(2s-22^*_{\alpha})e^{(2s-p\gamma_p)t}[u]^2$ and $C_p=\mu\gamma_p(p\gamma_p-22^*_{\alpha})\left\| u \right\|_p^p$, $\bar{g}$ must attain $C_p$ atleast twice and hence have atleast one critical point.  But, 
			$$\bar{g}'(t)= (2-2.2^*_{\alpha})(2-p\gamma_p)e^{(2-p\gamma_p)t}\left\| \nabla u \right\|_2^2+s(2s-22^*_{\alpha})(2s-p\gamma_p)e^{(2s-p\gamma_p)t}[u]^2>0,$$
			for all $ t\in \mathbb{R}$, thus we get contradiction. Hence $\psi_{u}$ has atmost two crtical points. Further, since $t\mapsto T(t\star u)$ is continuous and increasing map from $\mathbb{R}$ onto $(0,+\infty)$, we can find $t_1,t_2\in \mathbb{R}$ such that $R_0=T(t_1\star u)<T(t\star u)<T(t_2\star u)=R_1$ for all $t\in (t_1,t_2)$, by \eqref{2.3} and \autoref{Lemma 2.2}
			$$\psi_{u}(t)=E(t\star u)\geq h(T(t\star u))>0 \text{ for all } t\in (t_1,t_2).$$
			Also, one can see that $\psi_{u}(t)\rightarrow -\infty$ as $t\rightarrow +\infty$ and $\psi_{u}(t)\rightarrow 0^-$ as $t\rightarrow -\infty$, because $p\gamma_p<2s<2<22^*_{\alpha}$. Thus, $\psi_{u}$ can have the following curvature:
			\begin{center}
				\begin{tikzpicture}\label{fig 1}
					\draw[thick,<->] (-4.5,0)--(3,0) node[anchor=west]{$t\rightarrow \infty$};
					\draw (-4,0)--(-4,0)node[anchor= south east]{$t\rightarrow -\infty$};
					\draw[thick,dotted] (-0.75,0.4)--(-0.75,0) node[anchor=north]{$t_1$};
					\draw [thick,dotted](0,0.5)--(0,0)node[anchor=north]{$c_u$};
					\draw[thick,dotted] (0.75,0.4)--(0.75,0) node[anchor=north]{$t_2$};
					\draw [thick,<->] (-4,-0.5)..controls (-2, -2) and (-1,2.5)..(2,-0.5) node[anchor=north west]{$\psi_u(t)$};
					\draw [thick,dotted](-3.1,-0.8)--(-3.1,0) node[anchor=south]{$a_u$};
					\draw (-1.6,0)--(-1.6,0) node[anchor=south]{$b_u$};
					\draw (1.6,0)--(1.6,0) node[anchor=south]{$d_u$};
				\end{tikzpicture}
			\end{center}
			therefore, $\psi_u$ has exactly two critical points, corresponding to a local minima $(a_u)$ at negative level and global maxima $(c_u)$ at positive level, and exactly two roots ($b_u$ and $d_u$).
			\begin{enumerate}
				\item  Since, $a_u$ is a strict local minima of $\psi_{u}$, 
				$M(a_u\star u) = \psi_{u}'(a_u)=0$,  and 
				\begin{eqnarray*}
					0  < \psi_{u}''(a_u)& = & 2e^{2a_u}\left\| \nabla u \right\|_2^2+2s^2e^{2sa_u}[u]^2-\mu p\gamma_p^2e^{p\gamma_pa_u}\left\| u \right\|_p^p-22^*_{\alpha}e^{22^*_{\alpha}}A(u)\\
					& = & 2\left\| \nabla a_u\star u \right\|_2^2+2s^2[a_u \star u]^2-\mu p\gamma_p^2\left\| a_u\star u \right\|_p^p-22^*_{\alpha}A(a_u\star u),
				\end{eqnarray*}		 	 	
				thus $a_u \star u \in \mathcal{M}_{\tau}^+$. Similarly, since $c_u$ is global maxima of $\psi_u$, we will get $c_u\star u \in \mathcal{M}_{\tau}^-$. 
				Now, if $t\star u \in \mathcal{M}_{\tau}$, then clearly $t$ is a critical point of $\psi_u$, hence either $t=a_u$ or $t=c_u$. Moreover, since $\psi_{u}$ has exactly two crtical points, both corresponding to its extremas, $\mathcal{M}_{\tau}^0$ must be an empty set.
				\item It is evident by the curvature of $\psi_{u}$.
				\item By monotonicity of the surjective map $t\mapsto T(t\star u)$ onto $(0,\infty)$, it is clear that $T(t\star u)\leq T(t_1\star u)=R_0$ for all $t<b_u\leq t_1$. Moreover, since $\psi_u$ is decreasing in $(-\infty, a_u)$ and increasing in $(a_u,t_1]$,
				\begin{equation*}
					0>E(a_u\star u)= \psi_u(a_u)  =  \min\{\psi_u(t): t\leq t_1\} = \min\{E(t\star u): T(t\star u)\leq T(t_1\star u)=R_0\}.
				\end{equation*}
				\item By implicit function theorem, as done in the proof of Lemma 3.3 in \cite{Han2022Normalized}, clearly $\Phi_1$ and $\Phi_2$ are of class $C^1$.
			\end{enumerate}
		\end{proof}
		\begin{lemma}\label{Lemma 4.3}
			If $u\in \mathcal{M}_{\tau}$ is a critical point of $E|_{\mathcal{M}_{\tau}}$, then $u$ is a critical point of $E|_{S(\tau)}$.
		\end{lemma}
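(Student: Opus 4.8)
The plan is to invoke the Lagrange multiplier rule for the two constraints cutting out $\mathcal{M}_{\tau}$ and then annihilate the multiplier attached to $M$ by testing along the dilation flow $t\star u$. Write $G(u):=\tfrac12\|u\|_2^2$, so that $S(\tau)=\{G=\tfrac12\tau^2\}$ and $\mathcal{M}_{\tau}=\{G=\tfrac12\tau^2\}\cap\{M=0\}$. Since $u$ is a critical point of $E|_{\mathcal{M}_{\tau}}$, once $u$ is seen to be a regular point of the constraint map $(G,M)$, the Lagrange multiplier theorem produces $\lambda,\nu\in\mathbb{R}$ with
$$E'(u)=\lambda\,u+\nu\,M'(u)\quad\text{in }H^{-1}(\mathbb{R}^N),$$
where $G'(u)$ is identified with $u$ via the $L^2$ pairing. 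Comparing with \autoref{weak_sol}, the claim that $u$ is a critical point of $E|_{S(\tau)}$ is precisely this identity with $\nu=0$; hence the whole lemma reduces to proving $\nu=0$.

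The decisive structural input is the identity used for the fiber maps, $M(t\star u)=\psi_{u}'(t)$, which gives $\tfrac{d}{dt}\big|_{t=0}M(t\star u)=\psi_{u}''(0)$. An explicit differentiation yields
$$\psi_{u}''(0)=2\|\nabla u\|_2^2+2s^2[u]^2-\mu p\gamma_p^2\|u\|_p^p-22^*_{\alpha}A(u),$$
which is exactly the quantity distinguishing $\mathcal{M}_{\tau}^+,\mathcal{M}_{\tau}^-$ and $\mathcal{M}_{\tau}^0$. By \autoref{Lemma 2.3} the set $\mathcal{M}_{\tau}^0$ is empty, so for every $u\in\mathcal{M}_{\tau}$ one has $\psi_{u}''(0)\neq 0$ (positive on $\mathcal{M}_{\tau}^+$, negative on $\mathcal{M}_{\tau}^-$). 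Since $\psi_{u}''(0)=M'(u)[w]$ for the generator $w$ below and $G'(u)[w]=0$, the functionals $G'(u)$ and $M'(u)$ are linearly independent at $u$; this is the regularity of the constraint needed to legitimize the Lagrange rule above.

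To extract $\nu=0$ I would test the Lagrange identity against the infinitesimal generator $w:=\tfrac{d}{dt}\big|_{t=0}(t\star u)$ of the dilation flow. Because $\|t\star u\|_2\equiv\tau$, the curve $t\mapsto t\star u$ stays on $S(\tau)$, so $\langle u,w\rangle_2=\tfrac12\tfrac{d}{dt}\big|_{0}\|t\star u\|_2^2=0$; moreover $E'(u)[w]=\tfrac{d}{dt}\big|_{0}E(t\star u)=\psi_{u}'(0)=M(u)=0$ since $u\in\mathcal{M}_{\tau}$, while $M'(u)[w]=\psi_{u}''(0)$. Pairing $E'(u)=\lambda u+\nu M'(u)$ with $w$ therefore gives $0=\lambda\cdot 0+\nu\,\psi_{u}''(0)$, and since $\psi_{u}''(0)\neq 0$ we conclude $\nu=0$, whence $E'(u)=\lambda u$ and $u$ is a critical point of $E|_{S(\tau)}$.

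The genuine obstacle I expect is the rigorous justification of this final pairing: for $u\in H^1(\mathbb{R}^N)$ the generator $w=\tfrac{N}{2}u+x\cdot\nabla u$ need not lie in $H^1$, so $E'(u)[w]$ and $M'(u)[w]$ are not a priori well-defined dual pairings. The clean remedy is to read both quantities off the \emph{smooth} scalar functions $t\mapsto E(t\star u)=\psi_{u}(t)$ and $t\mapsto M(t\star u)=\psi_{u}'(t)$, which are explicit combinations of exponentials and hence differentiable irrespective of the $H^1$-regularity of $w$, and then to verify the chain-rule identifications $\tfrac{d}{dt}\big|_{0}E(t\star u)=E'(u)[w]$ and $\tfrac{d}{dt}\big|_{0}M(t\star u)=M'(u)[w]$ along this curve. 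Alternatively one avoids $w$ entirely: the Lagrange equation makes $u$ a weak solution of an elliptic problem driven by $-\Delta+(-\Delta)^s$, and the associated Pohozaev identity (cf. \eqref{2.2} in \autoref{Lemma 2.1}), after using $M(u)=0$, collapses to a relation of the form $\nu\,\psi_{u}''(0)=0$, again forcing $\nu=0$.
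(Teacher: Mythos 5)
Your central mechanism is exactly the paper's: introduce two Lagrange multipliers, observe that the multiplier attached to $M$ ends up multiplying precisely the discriminant $\psi_u''(0)=2\|\nabla u\|_2^2+2s^2[u]^2-\mu p\gamma_p^2\|u\|_p^p-2\cdot 2^*_{\alpha}A(u)$, and kill it using the emptiness of $\mathcal{M}_{\tau}^0$ from \autoref{Lemma 2.3}. The paper, however, does not implement this by pairing with the dilation generator. It writes down the modified equation \eqref{4.19} solved by $u$, and applies to that equation the two identities used in \autoref{Lemma 2.1} (testing by $u$, and the Pohozaev identity \eqref{2.2} with the modified coefficients); combining them and using $M(u)=0$ leaves exactly $\lambda_2\,\psi_u''(0)=0$, whence $\lambda_2=0$. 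This is precisely the ``alternative'' you sketch in your final sentence, so your fallback route coincides with the paper's proof.

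Your primary route, by contrast, has a genuine gap which your proposed remedy does not close. You correctly note that $w=\frac{N}{2}u+x\cdot\nabla u$ need not lie in $H^1(\mathbb{R}^N)$, so the pairings $E'(u)[w]$, $M'(u)[w]$ and $\langle u,w\rangle_2$ are not defined; but the fix you offer --- reading both quantities off the smooth scalar functions $\psi_u$, $\psi_u'$ and then ``verifying the chain-rule identifications'' --- is circular. Smoothness of $t\mapsto E(t\star u)$ is a statement about a real function of one variable and carries no information about the dual pairing of $E'(u)$ against anything; the identification $\frac{d}{dt}\big|_{t=0}E(t\star u)=E'(u)[w]$ requires the curve $t\mapsto t\star u$ to be differentiable in $H^1$ at $t=0$, which holds precisely when $x\cdot\nabla u\in L^2(\mathbb{R}^N)$. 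That extra regularity and decay is not available a priori for an $H^1$ critical point, and establishing it for the solution of \eqref{4.19} (via elliptic estimates) is essentially the same work as proving the Pohozaev identity itself --- which the paper instead imports from the cited references. The same caveat applies to your constraint-qualification argument for the linear independence of $G'(u)$ and $M'(u)$: it addresses a point the paper silently skips when invoking the Lagrange rule, which is to your credit, but the justification you give rests on the same undefined pairing. In short: the generator computation is the right heuristic, but the Pohozaev version you relegate to a remark is the only one of your two arguments that actually closes, and it is the paper's.
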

		\begin{proof}
			For a critical point $u $ of $E|_{\mathcal{M}_{\tau}}$, by the Lagrange multiplier's rule, there exists $\lambda_1$ and $\lambda_2\in \mathbb{R}$ such that:
			$$E'(u)(v)-\lambda_1 \int_{\mathbb{R}^N}uv-\lambda_2 M'(u)(v)=0 \text{ for all } v\in H^1(\mathbb{R}^N),$$
			that is,
			\begin{eqnarray*}
				(1-2\lambda_2)\int_{\mathbb{R}^N}\nabla u \nabla v +(1-2\lambda_2 s)\ll u,v \gg & = & \mu(1-\lambda_2p\gamma_p)\int_{\mathbb{R}^N}|u|^{p-2}uv+\lambda_1 \int_{\mathbb{R}^N}uv\\
				&& +(1-\lambda_2 22^*_{\alpha})\int_{\mathbb{R}^N}(I_{\alpha}*|u|^{2^*_{\alpha}})|u|^{2^*_{\alpha}-2}uv,
			\end{eqnarray*}
			for all $v\in H^1(\mathbb{R}^N)$, and hence, $u$ solves:
			\begin{equation}\label{4.19}
				-(1-2\lambda_2)\Delta u +(1-2\lambda_2 s)(-\Delta)^su =\lambda_1 u +\mu(1-\lambda_2 p\gamma_p )|u|^{p-2}u+(1-\lambda_222^*_{\alpha})(I_{\alpha}*|u|^{2^*_{\alpha}})|u|^{2^*_{\alpha}-2}u,
			\end{equation}
			in $\mathbb{R}^N$.
			Claim: $\lambda_2=0$. \\
			Now, as done in the proof of \autoref{Lemma 2.1}, by \eqref{4.19} we have:
			\begin{equation*}
				(1-2\lambda_2)\left\| \nabla u \right\|_2^2+(1-2s\lambda_2)[u]^2 = \lambda_1 \left\| u \right\|_2^2+\mu(1-\lambda_2 p\gamma_p)\left\| u \right\|_p^p+(1-\lambda_2 22*_{\alpha})A(u),
			\end{equation*} 
			and
			\begin{eqnarray*}
				\lambda_1 \left\| u \right\|_2^2 & = & \frac{2}{N}\left((1-2\lambda_2)\left(\frac{N-2}{2}\right)\left\| \nabla u \right\|_2^2+(1-2s\lambda_2)\left(\frac{N-2s}{2}\right)[u]^2\right.\\
				&&-\mu(1-\lambda_2 p\gamma_p)\frac{N}{p}\left\| u \right\|_p^p
				\left. - (1-\lambda_2 22^*_{\alpha})\left(\frac{N+\alpha}{22^*_{\alpha}}\right)A(u)\right),
			\end{eqnarray*}
			thus $$\lambda_2\left(2 \left\| \nabla u \right\|_2^2+2s^2[u]^2-\mu p\gamma_p\left\| u \right\|_p^p-22^*_{\alpha}A(u)\right)=0.$$
			Since, $\mathcal{M}_{\tau}^0$ is empty set, we must have $\lambda_2=0$. Therefore, $u$ is a critical point of $E|_{S(\tau)}$.
		\end{proof}
		\noindent 
		For any $k>0$, denoting $A_k=\{u\in S(\tau): T(u)<k\}$, we define 
		$$m_{\tau}:=\displaystyle \inf_{u\in A_{R_0}}E(u),$$ 
		where $R_0$ is as deduced in \autoref{Lemma 2.2},
		then we have the following results for $m_{\tau}$, $m_{\tau}^-$ and $m_{\tau}^+$:
		\begin{lemma}\label{Lemma 2.4}
			$m_{\tau}^->0.$
		\end{lemma}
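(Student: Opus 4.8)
The plan is to show that $m_\tau^-$ is bounded below by the \emph{positive} maximum value of the one–dimensional function $h$ from \autoref{Lemma 2.2}, which is independent of the particular $u\in\mathcal{M}_\tau^-$. The two ingredients are: first, that every point of $\mathcal{M}_\tau^-$ realizes the maximum of the energy along its own fiber; and second, the pointwise bound $E(\cdot)\ge h(T(\cdot))$ from \eqref{2.3}, combined with the surjectivity of the scaling $t\mapsto T(t\star u)$.

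First I would identify the position of an arbitrary $u\in\mathcal{M}_\tau^-$ inside its fiber. Since $u=0\star u\in\mathcal{M}_\tau$, \autoref{Lemma 2.3}(1) forces $0\in\{a_u,c_u\}$; and because $a_u\star u\in\mathcal{M}_\tau^+$ is disjoint from $\mathcal{M}_\tau^-$, the only possibility is $c_u=0$. Consequently, by \autoref{Lemma 2.3}(2),
$$E(u)=\psi_u(0)=\psi_u(c_u)=\max_{t\in\mathbb{R}}E(t\star u)>0.$$
This already gives $E(u)>0$ for each fixed $u$, but not yet a uniform bound, which is exactly the gap the next step closes.

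Next I would transport each $u$ to a fixed height. Let $t^\ast\in(R_0,R_1)$ denote the global maximizer of $h$, which exists at positive level $\delta:=h(t^\ast)>0$ by \autoref{Lemma 2.2}. As established in the proof of \autoref{Lemma 2.3}, the map $t\mapsto T(t\star u)$ is continuous, strictly increasing, and surjective onto $(0,\infty)$, so there is a unique $\bar t$ with $T(\bar t\star u)=t^\ast$. Using that $u$ maximizes the energy along its fiber together with \eqref{2.3},
$$E(u)=\max_{t\in\mathbb{R}}E(t\star u)\ \ge\ E(\bar t\star u)\ \ge\ h\big(T(\bar t\star u)\big)=h(t^\ast)=\delta>0.$$
Since $\delta$ does not depend on $u$, taking the infimum over $u\in\mathcal{M}_\tau^-$ yields $m_\tau^-\ge\delta>0$, as claimed.

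The conceptual crux—and the step I expect to carry the argument—is the passage from the pointwise positivity $E(u)>0$ to a lower bound \emph{uniform} in $u$. The naive estimate $E(u)\ge h(T(u))$ is useless on its own, because $T(u)$ may lie outside $(R_0,R_1)$ where $h$ is nonpositive; the device of rescaling along the fiber to the fixed peak $t^\ast$ of $h$, which is permissible precisely because $u$ sits at the fiber maximum, is what turns the variable height $T(u)$ into the constant barrier $\delta$. Beyond this, I would only need to record the routine facts (monotonicity and surjectivity of $t\mapsto T(t\star u)$, and $\delta>0$) already available from \autoref{Lemma 2.2} and \autoref{Lemma 2.3}.
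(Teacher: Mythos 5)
Your proposal is correct and follows essentially the same route as the paper's proof: both use that every $u\in\mathcal{M}_\tau^-$ sits at the maximum of its fiber map $\psi_u$, then rescale along the fiber (via surjectivity of $t\mapsto T(t\star u)$) to hit the peak of $h$, obtaining the uniform bound $m_\tau^-\ge h(t^\ast)>0$ from \eqref{2.3}. Your additional justification that $c_u=0$ (ruling out $a_u=0$ by disjointness of $\mathcal{M}_\tau^+$ and $\mathcal{M}_\tau^-$) is a detail the paper leaves implicit, but the argument is the same.
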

		\begin{proof}
			For any $u\in \mathcal{M}_{\tau}^-$ we have, $0\star u= u\in \mathcal{M}_{\tau}^-$, then by \autoref{Lemma 2.3},  $0$ is the global maxima of $\psi_u$ at a positive level and	$E(u)=\psi_{u}(0)=\max\{E(t\star u): t\in \mathbb{R}\}>0$, hence $m_{\tau}^-\geq 0$. Moreover, for every $u\in \mathcal{M}_{\tau}^-$, we can find some $t_u\in \mathbb{R}$ such that $T(t_u\star u)=t_0$, where $t_0$ is the global maxima of $h$ deduced in \autoref{Lemma 2.2}. Thus,
			$$E(u)=\max\{E(t\star u): t\in \mathbb{R}\}\geq E(t_u\star u)\geq h(T(t_u\star u))=h(t_0)>0 \text{ for all } u\in \mathcal{M}_{\tau}^-,$$
			hence $m_{\tau}^-\geq h(t_0)>0$.
		\end{proof}
		\begin{lemma}\label{Lemma 2.5}
			$\displaystyle \sup_{u\in \mathcal{M}_{\tau}^+}E(u)\leq 0 < m_{\tau}^-$ and $\mathcal{M}_{\tau}^+\subset A_{R_0}$.
		\end{lemma}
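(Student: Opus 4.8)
The plan is to read off the defining inequality of $\mathcal{M}_\tau^+$ as a statement about the sign of $\psi_u''(0)$, and then feed this into the fibre-map analysis of \autoref{Lemma 2.3}. First I would observe that any $u\in\mathcal{M}_\tau^+\subset\mathcal{M}_\tau$ satisfies $\psi_u'(0)=M(u)=0$, so $t=0$ is one of the (exactly) two critical points of $\psi_u$. Differentiating the explicit expression for $\psi_u$ twice and evaluating at $t=0$ gives
$$\psi_u''(0)=2\left\| \nabla u \right\|_2^2+2s^2[u]^2-\mu p\gamma_p^2\left\| u \right\|_p^p-22^*_{\alpha}A(u),$$
and the inequality defining $\mathcal{M}_\tau^+$ is precisely $\psi_u''(0)>0$. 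Since \autoref{Lemma 2.3} tells us $\psi_u$ has exactly two critical points, a local minimum $a_u$ (where $\psi_u''>0$) and a global maximum $c_u$ (where $\psi_u''<0$), the critical point $t=0$ must be the minimum, i.e. $a_u=0$ for every $u\in\mathcal{M}_\tau^+$.

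With $a_u=0$ in hand, the energy bound is immediate: part 3 of \autoref{Lemma 2.3} gives $E(u)=E(a_u\star u)<0$, so passing to the supremum yields $\sup_{u\in\mathcal{M}_\tau^+}E(u)\leq 0$. Combining this with \autoref{Lemma 2.4}, which asserts $m_\tau^->0$, produces the full chain $\sup_{u\in\mathcal{M}_\tau^+}E(u)\leq 0<m_\tau^-$.

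For the inclusion $\mathcal{M}_\tau^+\subset A_{R_0}$ I would again use $a_u=0$. From the ordering $a_u<b_u$ we get $0<b_u$, so part 3 of \autoref{Lemma 2.3} applied at $t=0<b_u$ gives $T(u)=T(0\star u)\leq R_0$. To upgrade this to the strict inequality required by the definition of $A_{R_0}$, I note that $E(u)\geq h(T(u))$ by \eqref{2.3}; if $T(u)=R_0$ held, then $E(u)\geq h(R_0)=0$ by \autoref{Lemma 2.2}, contradicting $E(u)<0$ from the previous step. Hence $T(u)<R_0$, i.e. $u\in A_{R_0}$.

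The single point I would be careful about is the identification $a_u=0$, which is really the whole content of the lemma: it rests on matching the algebraic inequality in the definition of $\mathcal{M}_\tau^+$ with the analytic condition $\psi_u''(0)>0$, and on the fact, already established in \autoref{Lemma 2.3}, that $\psi_u$ has exactly two critical points of opposite concavity so that the sign of $\psi_u''$ unambiguously labels $t=0$ as the minimiser. Everything else is short bookkeeping that merely reuses the negativity $E(a_u\star u)<0$ and the threshold value $h(R_0)=0$, so I do not anticipate any genuine analytic difficulty beyond that identification.
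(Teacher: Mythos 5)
Your proposal is correct and follows essentially the same route as the paper: both arguments hinge on identifying $a_u=0$ for every $u\in\mathcal{M}_\tau^+$ (the paper asserts this without spelling out the matching of the defining inequality of $\mathcal{M}_\tau^+$ with $\psi_u''(0)>0$, which you make explicit) and then invoking parts of \autoref{Lemma 2.3} together with \autoref{Lemma 2.4}. The only cosmetic difference is in obtaining the strict inequality $T(u)<R_0$: the paper reads it off from the strict monotonicity of $t\mapsto T(t\star u)$ and $a_u=0<t_1$, whereas you upgrade the non-strict bound by contradiction using $h(R_0)=0$ and $E(u)<0$ --- both are one-line arguments of equal strength.
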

		\begin{proof}
			Clearly, for any $u\in \mathcal{M}_{\tau}^+$, $a_u=0$, thus by \autoref{Lemma 2.3} $E(u)<0$ and hence by \autoref{Lemma 2.4} $\displaystyle \sup_{u\in \mathcal{M}_{\tau}^+}E(u)\leq 0 <m_{\tau}^-$. Further, $T(u)=T(a_u\star u)<T(t_1 \star u)=R_0$, for all $u\in \mathcal{M}_{\tau}^+$, since $0=a_u<t_1$. Hence $\mathcal{M}_{\tau}^+\subset A_{R_0}$.
		\end{proof}
		\begin{lemma}\label{Lemma 2.6}$\displaystyle -\infty <m_{\tau}=\inf_{u\in \mathcal{M}_{\tau}}E(u)=m_{\tau}^+<0,$ and
			for $\delta>0$ small enough
			\begin{equation}\label{2.4}
				m_{\tau}<\displaystyle \inf_{\bar{A}_{R_0}\setminus A_{R_0-\delta}}E(u).
			\end{equation}
		\end{lemma}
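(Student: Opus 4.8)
The plan is to establish the chain $-\infty < m_\tau = \inf_{\mathcal{M}_\tau}E = m_\tau^+ < 0$ one link at a time, relying throughout on the pointwise bound $E(u)\ge h(T(u))$ from \eqref{2.3}, the geometry of $h$ recorded in \autoref{Lemma 2.2}, and the fiber-map analysis of \autoref{Lemma 2.3}. First I would settle the two sided bound on $m_\tau$. For finiteness, every $u\in A_{R_0}$ has $T(u)<R_0$, so $E(u)\ge h(T(u))\ge \min_{[0,R_0]}h$, and this minimum is finite by continuity of $h$; hence $m_\tau\ge \min_{[0,R_0]}h>-\infty$. For $m_\tau<0$, I would take any $u\in S(\tau)$ and push it along its fiber: by \autoref{Lemma 2.3}(3) the point $a_u\star u$ satisfies $T(a_u\star u)<R_0$, so it lies in $A_{R_0}$, and $E(a_u\star u)<0$; therefore $m_\tau\le E(a_u\star u)<0$.

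Next I would identify $m_\tau$ with $m_\tau^+$. The inequality $m_\tau\le m_\tau^+$ is immediate from \autoref{Lemma 2.5}, which gives $\mathcal{M}_\tau^+\subset A_{R_0}$, so the infimum over the larger set $A_{R_0}$ cannot exceed the infimum over $\mathcal{M}_\tau^+$. The reverse inequality is the crux. For an arbitrary $u\in A_{R_0}$ I would again invoke the fiber projection: since $T(u)<R_0$, the element $u=0\star u$ is admissible in the minimization of \autoref{Lemma 2.3}(3), whence $E(u)\ge E(a_u\star u)$; and since $a_u\star u\in\mathcal{M}_\tau^+$ by \autoref{Lemma 2.3}(1), we obtain $E(u)\ge m_\tau^+$. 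Taking the infimum over $A_{R_0}$ yields $m_\tau\ge m_\tau^+$, hence $m_\tau=m_\tau^+$. For the last equality $m_\tau=\inf_{\mathcal{M}_\tau}E$, I would use that $\mathcal{M}_\tau^0=\emptyset$ (\autoref{Lemma 2.3}(1)), so $\mathcal{M}_\tau=\mathcal{M}_\tau^+\cup\mathcal{M}_\tau^-$ and $\inf_{\mathcal{M}_\tau}E=\min\{m_\tau^+,m_\tau^-\}$; since $m_\tau^+=m_\tau<0<m_\tau^-$ by \autoref{Lemma 2.4}, the minimum equals $m_\tau^+=m_\tau$.

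Finally, for the boundary separation \eqref{2.4}, I would bound $E$ below by $h$ on the annular region $\bar{A}_{R_0}\setminus A_{R_0-\delta}=\{u: R_0-\delta\le T(u)\le R_0\}$. Because $h(R_0)=0$ and $h$ is strictly increasing just to the left of $R_0$ (it crosses zero upward there, as the negative local minimum sits at some $t^\ast<R_0$ by \autoref{Lemma 2.2}), for $\delta<R_0-t^\ast$ the minimum of $h$ on $[R_0-\delta,R_0]$ equals $h(R_0-\delta)$, which tends to $h(R_0)=0$ as $\delta\to 0$. Since $m_\tau<0$, choosing $\delta$ small enough forces $h(R_0-\delta)>m_\tau$, and then $\inf_{\bar{A}_{R_0}\setminus A_{R_0-\delta}}E\ge \min_{[R_0-\delta,R_0]}h=h(R_0-\delta)>m_\tau$, which is exactly \eqref{2.4}.

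The main obstacle is the reverse inequality $m_\tau\ge m_\tau^+$: one must verify that the fiber projection $u\mapsto a_u\star u$ simultaneously remains inside $A_{R_0}$ and does not raise the energy. This is precisely what the variational characterization $E(a_u\star u)=\min\{E(t\star u):T(t\star u)\le R_0\}$ in \autoref{Lemma 2.3}(3) delivers, so the argument reduces to correctly reading off that characterization rather than to any fresh estimate.
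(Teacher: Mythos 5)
Your proposal is correct and follows essentially the same route as the paper: the bound $E(u)\ge h(T(u))$ for finiteness, the fiber projection $u\mapsto a_u\star u$ together with $\mathcal{M}_{\tau}^+\subset A_{R_0}$ from \autoref{Lemma 2.5} to get $m_\tau=m_\tau^+<0$, the emptiness of $\mathcal{M}_\tau^0$ and \autoref{Lemma 2.4} to identify $\inf_{\mathcal{M}_\tau}E$, and the behavior of $h$ near $R_0$ for \eqref{2.4}. The only cosmetic difference is that in the last step you invoke monotonicity of $h$ on $[t^\ast,R_0]$ to pin the minimum at $h(R_0-\delta)$, whereas the paper simply uses continuity of $h$ at $R_0$ (choosing $\delta$ so that $h\ge m_\tau/2$ on $[R_0-\delta,R_0]$); both arguments are valid and rest on the same geometry from \autoref{Lemma 2.2}.
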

		\begin{proof}
			For any $u\in A_{R_0}$, we have:
			$$E(u)\geq h(T(u))\geq \min_{t\in [0,R_0]}h(t)>-\infty,$$
			and hence $m_{\tau}>-\infty$. Also, since $a_u\star u \in \mathcal{M}_{\tau}^+\subset A_{R_0}$, $$-\infty< m_{\tau}=\displaystyle \inf_{u\in A_{R_0}}E(u)\leq E(a_u\star u)=\psi_u(a_u)<0.$$ 
			Further, if $u\in A_{R_0}$, then by \autoref{Lemma 2.3} $E(u)=E(0\star u)\geq E(a_u\star u)\geq m_{\tau}^+$, hence $m_{\tau}\geq m_{\tau}^+$. Also since $\mathcal{M}_{\tau}^+\subset A_{R_0}$ we get $m_{\tau}=m_{\tau}^+$. Now, since $\mathcal{M}_{\tau}=\mathcal{M}_{\tau}^+\cup \mathcal{M}_{\tau}^+\cup\mathcal{M}_{\tau}^0$, $\mathcal{M}_{\tau}^0$ is an empty set and 
			$$m_{\tau}^+=\inf_{u\in \mathcal{M}_{\tau}^+}E(u)\leq \sup_{u\in \mathcal{M}_{\tau}^+}E(u)\leq 0<\inf_{u\in \mathcal{M}_{\tau}^-}E(u),$$
			by \autoref{Lemma 2.5}, then clearly $\displaystyle \inf_{u\in \mathcal{M}_{\tau}}E(u)=\inf_{u\in \mathcal{M}_{\tau}^+}E(u)=m_{\tau}^+$. Therefore,
			$$-\infty<m_{\tau}=\inf_{u\in \mathcal{M}_{\tau}}E(u)=m_{\tau}^+<0.$$
			Now, since $h$ is continuous, $h(R_{0})=0$, $h(t)<0$ for all $t\in (0,R_0)$ and $m_{\tau}<0$, we can find $\delta>0$ small enough so that $h(t)\geq \frac{m_{\tau}}{2}$ for all $t\in [R_0-\delta,R_0]$. Hence, for all $u\in \bar{A}_{R_0}\setminus A_{R_0-\delta}$,
			$$R_0-\delta<T(u)\leq R_0 \Rightarrow E(u)\geq h(T(u))\geq \frac{m_{\tau}}{2}>m_{\tau}.$$
			Thus, we get \eqref{2.4}.
		\end{proof}
		\section{First solution}
		In this section, using the above prerequisite results, symmetric decreasing rearrangement, and Ekeland variational principle, we will be showing the existence of a radially symmetric function $u_{\tau}^+\in \mathcal{M}_{\tau}^+$ and $\lambda_{\tau}^+<0$, such that $(u_{\tau}^+,\lambda_{\tau}^+)$ solves \eqref{prob}. The subsequent rearrangement inequalities will be beneficial for this purpose.
		\begin{remark}\label{Remark 3.1}
			For any $u\in H^1(\mathbb{R}^N)$, let $u^*$ be its symmetric decreasing rearrangement, then we have the following:
			\begin{enumerate}
				\item $\left\| u \right\|_q= \left\| u^*\right\|_q$ for all $q\in [2,2^*]$,
				\item $A(u)\leq A(u^*)$,
				\item $\left\| \nabla u^* \right\|_2\leq \left\| \nabla u \right\|_2$ and $[u^*]^2\leq [u]^2$.
			\end{enumerate}
		\end{remark}		
		\noindent Interested readers can go through \cite{Baernstein2019Symmetrization, Burchard2009Short, Lieb2001Analysis} and \cite[Remark~2.1]{Nidhi2025Normalized} to see the proof.
		\begin{myproof}{Theorem}{\ref{Theorem 1}}
			Let $\{w_n\}\subset A_{R_0}$ be the minimizing sequence for $E$ on $A_{R_0}$, then taking $w_n^*$ to be the symmetric decreasing rearrangement of $w_n$. By the rearrangement inequalities, \autoref{Remark 3.1}, it can be seen that $\{w_n^*\}\subset A_{R_0}$ and $E(w_n^*)\leq E(w_n)$ for each $n\in \mathbb{N}$, thus, $\{w_n^*\}$ is a minimizing sequence as well. Now, for each $n\in \mathbb{N}$, by \autoref{Lemma 2.3} there exists $a_{n}\in \mathbb{R}$ such that $a_n\star w_n^*\in \mathcal{M}_{\tau}^+$ and $E(w_n^*)=E(0\star w_n^*)\geq E(a_n\star w_n^*)$.
			Taking $v_n=a_n\star w_n^*$ to be the minimizing sequence for $E$ on $\mathcal{M}_{\tau}^+$ and hence, that of $E$ on $A_{R_0}$, clearly, $v_n$ is radially symmetric and $T(v_n)<R_0-\delta$ for all $n\in \mathbb{N}$.
			Applying Ekeland variational principle 
			(see Theorem 1.1 and its corollaries in \cite{Ghoussoub}) we can find a sequence of radially symmetric functions, $\{u_n\}$
			such that\begin{equation}\label{3.1}
				\left\{
				\begin{array}{cc}
					E(u_n)\rightarrow m_{\tau} & \text{ as } n\rightarrow \infty,\\
					E(u_n)\leq E(v_n) & \text{ for all } n\in \mathbb{N},\\
					M(u_n) \rightarrow 0 & \text{ as } n\rightarrow \infty,\\
					E'_{S(\tau)}(u_n) \rightarrow 0 & \text{ as } n\rightarrow \infty.\\
				\end{array}
				\right.
			\end{equation}
			Here, $E'_{S(\tau)}(u_n) \rightarrow 0$ means that the sequence  $y_n=\sup\left\{\frac{E'(u_n)(w)}{\left\| w \right\|}: w\in S({\tau})\right\}$ converges to $0$. Now, by \eqref{3.1} and the method of Lagrange multipliers, we can find a sequence $\{\lambda_n\}$ such that:
			\begin{equation}\label{3.2}
				E'(u_n)-\lambda_n\Phi'(u_n) \rightarrow 0, \text{ where } \Phi(u)=\frac{1}{2}\left\| u \right\|_2^2.
			\end{equation}
			Clearly, since $\{u_n\}\subset A_{R_0}$, it is bounded in $H^1(\mathbb{R}^N)$ and hence, weakly convergent upto a subsequence in $H^1(\mathbb{R}^N)$. Denoting the subsequence by $\{u_n\}$ itself, let $u_0\in H^1(\mathbb{R}^N)$ be such that $u_n\rightharpoonup u_0$. Clearly, $u_0\in H_r(\mathbb{R}^N)$.\\
			Claim 1: $\lambda_n \rightarrow \lambda<0$, up to a subsequence.\\
			Clearly,
			\begin{equation}\label{new_eq}
				o_n(1)= \left\| \nabla u_n \right\|_2^2+[u_n]^2-\mu \left\| u_n \right\|_p^p-A(u_n)-\lambda_n\tau^2,
			\end{equation}
			by weak convergence of $\{u_n\}$ and \eqref{3.2}.
			Then, by Fatou's lemma and the compact imbedding of $H_r(\mathbb{R}^N)$ in $L^p(\mathbb{R}^N)$ (see \cite[Lemma~3.1.4]{Badiale2010Semilinear}) we have:
			$$\lambda_n \leq\frac{T(u_n)^2}{\tau^2}-\frac{\mu \left\| u_0\right\|_p^p}{\tau^2}-\frac{A(u_0)}{\tau^2}+o(1),$$
			hence by boundedness of $\{u_n\}$ in $H^1(\mathbb{R}^N)$, 
			$$|\tau^2\lambda_n|\leq |T(u_n)^2|+\mu \left\| u_0 \right\|_p^p+|A(u_0)|+o(1)<+\infty.$$
			Thus $\{\lambda_n\}$ is bounded and hence convergent upto a subsequence. Denoting the subsequence by $\{\lambda_n\}$ itself, let $\lambda_0\in \mathbb{R}$ be such that $\lambda_n\rightarrow \lambda_0$. Now, since $u_n\in \mathcal{M}_{\tau}$, by \eqref{new_eq} and the fact that $\gamma_p<1$ we get:
			\begin{eqnarray*}
				\lambda_0\tau^2 & = & \lim_{n\rightarrow \infty}\left( \left\| \nabla u_n \right\|_2^2+[u_n]^2-\mu\left\| u_n\right\|_p^p-A(u_n)\right)\\
				& = & \lim_{n\rightarrow \infty} \left((1-s)[u_n]^2+\mu\left(\gamma_p-1\right)\left\| u_n \right\|_p^p\right)< 0,
			\end{eqnarray*}
			for sufficiently large $\mu>0$.\\
			Claim 2: $u_0\neq 0$.\\
			Suppose $u_0=0$, then by the compact embedding $H_r(\mathbb{R}^N)\hookrightarrow L^q(\mathbb{R}^N)$ for all $q\in (2,2^*)$ and \eqref{3.1}, we get $\displaystyle \lim_{n\rightarrow \infty}A(u_n)=\lim_{n\rightarrow \infty}T_s(u_n)^2$, where $T_s(u):=(\left\| \nabla u \right\|_2^2+s[u]^2)^{\frac{1}{2}}$. Suppose $T_s(u_n)^2\rightarrow l$, then by \eqref{S_alpha}
			$$l\leq \frac{l^{2^*_{\alpha}}}{S_{\alpha}^{2^*_{\alpha}}}\Rightarrow l(S_{\alpha}^{2^*_{\alpha}}-l^{2^*_{\alpha}-1})\leq 0.$$
			Since $m_{\tau}<0$, $l=0$ will lead us to a contradiction, because if $l=0$, then
			\begin{equation*}
				m_{\tau}=\lim_{n\rightarrow \infty} E(u_n)\geq \lim_{n\rightarrow \infty}\left(\frac{T_s(u_n)^2}{2}-\frac{\mu \left\| u_n \right\|_p^p}{p}-\frac{A(u_n)}{22^*_{\alpha}}\right)=0.
			\end{equation*}
			Hence we must have $l\geq S_{\alpha}^{\frac{N+\alpha}{\alpha+2}}$. Now,
			\begin{eqnarray*}
				m_{\tau} & = & \lim_{n\rightarrow \infty}E(u_n) =\lim_{n\rightarrow \infty}\left(E(u_n)-\frac{M(u_n)}{22^*_{\alpha}}\right)\\
				& = & \lim_{n\rightarrow \infty} \left(\left(\frac{2^*_{\alpha}-1}{22^*_{\alpha}}\right)\left\| \nabla u_n \right\|_2^2+\left(\frac{2^*_{\alpha}-s}{22^*_{\alpha}}\right)[u_n]^2+\left(\frac{1}{p}-\frac{\gamma_p}{22^*_{\alpha}}\right)\left\| u_n \right\|_p^p\right)\\
				& \geq & \left(\frac{2^*_{\alpha}-1}{22^*_{\alpha}}\right)\lim_{n\rightarrow \infty}T_s(u_n)^2= \left(\frac{2^*_{\alpha}-1}{22^*_{\alpha}}\right)l\geq \left(\frac{2^*_{\alpha}-1}{22^*_{\alpha}}\right)S_{\alpha}^{\frac{N+\alpha}{\alpha+2}}\geq 0,
			\end{eqnarray*}
			thus, we are again lead to a contradiction. Therefore $u_0\neq 0$.\\
			Claim 3: $(u_0, \lambda_0)$ solves \eqref{prob}.\\
			Since $\lambda_0<0$, we can define the following equivalent norm on $H^1(\mathbb{R}^N)$:
			
			$$\left\| u \right\|_{\lambda_0}:=(\left\|\nabla u \right\|_2^2+[u]^2-\lambda_0\left\| u \right\|_2^2)^{\frac{1}{2}}.$$
			Then for any $v\in H^1(\mathbb{R}^N)$, by \eqref{3.2} we have:
			\begin{eqnarray}\label{3.4}
				0 & = & \lim_{n\rightarrow \infty}\left(E'(u_n)(v)-\lambda_n\Phi'(u_n)(v)\right)\nonumber\\
				& = & \int_{\mathbb R^N}\nabla u_0\nabla v +\ll u_0, v \gg- \lambda_0\int_{\mathbb{R}^N}u_0v-A'(u_0)(v)-\mu\int_{\mathbb{R}^N}|u_0|^{p-2}u_0 v,
			\end{eqnarray}
			since the mappings, $u\mapsto \frac{\left\|u \right\|_p^p}{p}$ and $A$ defined on $H^1(\mathbb{R}^N)$ are of class $C^1$. Thus, $u_0$ solves:
			$$-\Delta u_0 +(-\Delta)^su_0 = \lambda_0u_0+\mu|u_0|^{p-2}u_0+(I_{\alpha}*|u_0|^{2^*_{\alpha}})|u|^{2^*_{\alpha}-2}u_0 \text{ in } \mathbb{R}^N.$$
			Next, we will show that $\left\| u_0 \right\|_2=\tau$.
			Following the proof of \autoref{Lemma 2.1}, we have $M(u_0)=0$. Now, define $\bar{u}_n:=u_n-u_0$.
			Since $\bar{u}_n\rightharpoonup 0$ in $H^1(\mathbb{R}^N)$ and hence in $H_r(\mathbb{R}^N)$, then by Brezis Lieb lemma, lemma 2.4 of \cite{Moroz2013groundstates} and compact imbedding of $H_r(\mathbb{R}^N)$ in $L^p(\mathbb{R}^N)$, we get
			\begin{equation}\label{3.5}
				\begin{array}{rcl}
					\left\| \nabla \bar{u}_n \right\|_2^2 & = & \left\| \nabla u_n \right\|_2^2-\left\| u_0 \right\|_2^2+o_n(1)\\
					\left[\bar{u}_n\right]^2 & = & [u_n]^2-[u_0]^2+o_n(1)\\
					A(\bar{u}_n) & = & A(u_n)-A(u_0)+o_n(1),\\
					\left\| \bar{u}_n \right\|_p^p & = & o_n(1).
				\end{array}
			\end{equation}
			Now, by \eqref{3.5},
			\begin{eqnarray*}
				\lim_{n\rightarrow \infty}M(\bar{u}_n) & = & \lim_{n\rightarrow \infty}\left(\left\| \nabla \bar{u}_n \right\|_2^2+s[\bar{u}_n]^2-\mu \gamma_p\left\| \bar{u}_n\right\|_p^p-A(\bar{u}_n)\right)\\
				& = & \lim_{n\rightarrow \infty}\left(\left\| \nabla u_n \right\|_2^2+s[u_n]^2-A(u_n)- (\left\| \nabla u_0 \right\|_2^2+s[u_0]^2-A(u_0))\right) \\
				& = & \lim_{n\rightarrow \infty} \left(M(u_n)-\mu\gamma_p\left\| u_n \right\|_p^p-M(u_0)+\mu\gamma_p\left\| u_0 \right\|_p^p \right)= 0.
			\end{eqnarray*}
			Therefore, $\displaystyle \lim_{n\rightarrow \infty}\left(\left\|\nabla \bar{u}_n\right\|_2^2+s[\bar{u}_n]^2\right)=\lim_{n\rightarrow \infty}\left(\mu\gamma_p\left\| \bar{u}_n\right\|_p^p+A(\bar{u}_n)\right)=\lim_{n\rightarrow \infty}A(\bar{u}_n)$. Since $\{\bar{u}_n\}$ is bounded in $H^1(\mathbb{R}^N)$, upto subsequence $\{\left\| \nabla \bar{u}_n\right\|_2^2+s[\bar{u}_n]^2\}$ is convergent. Denoting the convergent subsequence as $\{\left\| \nabla \bar{u}_n\right\|_2^2+s[\bar{u}_n]^2\}$ itself, let $l\geq 0$, be such that
			\begin{equation}\label{3.6}
				l=\lim_{n\rightarrow \infty}\left(\left\| \nabla \bar{u}_n\right\|_2^2+s[\bar{u}_n]^2\right)=\lim_{n\rightarrow \infty}A(\bar{u}_n),
			\end{equation}
			then, by \eqref{S_alpha}, we have, either $l=0$ or $l\geq S_{\alpha}^{\frac{2^*_{\alpha}}{2^*_{\alpha}-1}}$.\\
			Subclaim: $l=0$.\\
			Let if possible, $l\geq S_{\alpha}^{\frac{2^*_{\alpha}}{2^*_{\alpha}-1}}$, then by \eqref{3.5}, Fatou's lemma and Gagliardo-Nirenberg inequality \eqref{G_N_inequality},
			\begin{eqnarray*}
				m_{\tau} & = & \lim_{n\rightarrow \infty}E(u_n)\\
				& = & \lim_{n\rightarrow \infty}\left(\frac{\left\| \nabla \bar{u}_n\right\|+\left\|\nabla u_0\right\|_2^2}{2}+\frac{[\bar{u}_n]^2+[u_0]^2}{2}-\mu \frac{\left\| u_n \right\|_p^p}{p}-\frac{A(\bar{u}_n)+A(u_0)}{22^*_{\alpha}}\right)\nonumber\\
				& \geq & \lim_{n\rightarrow \infty}\left(\frac{\left\| \nabla \bar{u}_n\right\|_2^2+s[\bar{u}_n]^2}{2}
				-\frac{A(\bar{u}_n)}{22^*_{\alpha}}\right)+E(u_0)\nonumber\\
				& = & \left(\frac{2^*_{\alpha}-1}{22^*_{\alpha}}\right)l+E(u_0)\geq \left(\frac{2^*_{\alpha}-1}{22^*_{\alpha}}\right)S_{\alpha}^{\frac{2^*_{\alpha}}{2^*_{\alpha}-1}}+E(u_0)\nonumber\\
				& = & \left(\frac{2^*_{\alpha}-1}{22^*_{\alpha}}\right)S_{\alpha}^{\frac{2^*_{\alpha}}{2^*_{\alpha}-1}}+E(u_0) -\frac{M(u_0)}{22^*_{\alpha}}\nonumber\\
				& \geq & \left(\frac{2^*_{\alpha}-1}{22^*_{\alpha}}\right)T(u_0)^2+\mu\left(\frac{p\gamma_p-22^*_{\alpha}}{22^*_{\alpha}p}\right)\left\| u_0 \right\|_p^p+\left(\frac{2^*_{\alpha}-1}{22^*_{\alpha}}\right)S_{\alpha}^{\frac{2^*_{\alpha}}{2^*_{\alpha}-1}}\nonumber\\
				& \geq &\left(\frac{2^*_{\alpha}-1}{22^*_{\alpha}}\right)T(u_0)^2+\mu\left(\frac{p\gamma_p-22^*_{\alpha}}{22^*_{\alpha}p}\right)C_{N,p}T(u_0)^{p\gamma_p}\tau^{p(1-\gamma_p)}+ \left(\frac{2^*_{\alpha}-1}{22^*_{\alpha}}\right)S_{\alpha}^{\frac{2^*_{\alpha}}{2^*_{\alpha}-1}}\nonumber\\
				& = & f(T(u_0))+\left(\frac{2^*_{\alpha}-1}{22^*_{\alpha}}\right)S_{\alpha}^{\frac{2^*_{\alpha}}{2^*_{\alpha}-1}},
			\end{eqnarray*}
			where $$f(t)=\left(\frac{2^*_{\alpha}-1}{22^*_{\alpha}}\right)t^2+\mu\left(\frac{p\gamma_p-22^*_{\alpha}}{22^*_{\alpha}p}\right)C_{N,p}t^{p\gamma_p}\tau^{p-p\gamma_p}.$$
			Now, since $t_0=\left(\frac{(22^*_{\alpha}-p\gamma_p)\gamma_p\mu C_{N,p}\tau^{p-p\gamma_p}}{2(2^*_{\alpha}-1)}\right)^{\frac{1}{2-p\gamma_p}}$ is the point of global minima of $f$. Thus,
			\begin{eqnarray*}
				m_{\tau} & \geq & f(t_0)+\left(\frac{2^*_{\alpha}-1}{22^*_{\alpha}}\right)S_{\alpha}^{\frac{2^*_{\alpha}}{2^*_{\alpha}-1}}\\
				& = & -\left(\frac{\gamma_p}{2^*_{\alpha}-1}\right)^{\frac{p\gamma_p}{2-p\gamma_p}}\left(\frac{2-p\gamma_p}{22^*_{\alpha}p}\right)\left(\frac{(22^*_{\alpha}-p\gamma_p)\mu C_{N,p}\tau^{p(1-\gamma_p)}}{2}\right)^{\frac{2}{2-p\gamma_p}}+\left(\frac{2^*_{\alpha}-1}{22^*_{\alpha}}\right)S_{\alpha}^{\frac{2^*_{\alpha}}{2^*_{\alpha}-1}}\\
				& > &0, \text{ for } \tau<\tau_1.
			\end{eqnarray*}
			But this contradicts \autoref{Lemma 2.6}. Therefore $l=0$. Now, by \eqref{3.5} and \eqref{3.6},  $\displaystyle \lim_{n\rightarrow \infty}A(u_n)=A(u_0)$ and $\displaystyle \lim_{n\rightarrow \infty}T(u_n)\rightarrow T(u_0)$, then taking $u_0$ as test function in \eqref{3.4} and using \eqref{3.1} we get:
			\begin{eqnarray*}
				\lambda_0\left\| u_0 \right\|_2^2 & = & E'(u_0)(u_0)-\lim_{n\rightarrow \infty}\left(E'(u_n)(u_n)-\lambda_n\Phi'(u_n)(u_n)\right)= \lambda_0\lim_{n\rightarrow \infty}\left\| u_n \right\|_2^2=\lambda_0 \tau^2.
			\end{eqnarray*}
			Hence $u_0$ is a solution of \eqref{prob} and $u_n\rightarrow u_0$ strongly in $H^1(\mathbb{R}^N)$. Taking $u_{\tau}^+=u_0$ and $\lambda_{\tau}^+=\lambda_0$, we are done.
		\end{myproof}
		\section{Second Solution}
		Until now, we have seen that the infimum of $E$ on $\mathcal{M}_{\tau}^+$ is achieved and is a solution of \eqref{prob}. In this section, we will see that the infimum over $\mathcal{M}_{\tau}^-$, that is, $m_{\tau}^-$ is also achieved. Since the spaces $\mathcal{M}_{\tau}^+$ and $\mathcal{M}_{\tau}^-$ are disjoint, this corresponds to the second normalized solution. The following result will play a crucial role in proving the convergence of the Palaise Smale sequence, by providing us an upper bound for $m_{\tau}^-$. 
		\begin{lemma}\label{Lemma 4.1}
			For all $\tau<\min\{\tau_0,\tau_1\}$,
			\begin{equation}\label{4.1}
				m_{\tau}^-=\inf_{u\in \mathcal{M}_{\tau}^-}E(u)<m_{\tau}+\left(\frac{2^*_{\alpha}-1}{22^*_{\alpha}}\right)S_{\alpha}^{\frac{2^*_{\alpha}}{2^*_{\alpha}-1}}.
			\end{equation}
		\end{lemma}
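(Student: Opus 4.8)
The plan is to exhibit a single competitor in $\mathcal{M}_\tau^-$ whose energy lies strictly below the threshold $m_\tau+c_*$, where $c_*:=\left(\frac{2^*_{\alpha}-1}{22^*_{\alpha}}\right)S_{\alpha}^{\frac{2^*_{\alpha}}{2^*_{\alpha}-1}}$ is exactly the maximal value, over the amplitude $s\ge0$, of $\frac{s^2}{2}\|\nabla U\|_2^2-\frac{s^{22^*_{\alpha}}}{22^*_{\alpha}}A(U)$ for an extremal $U$ of \eqref{S_alpha}. I would take $u_0:=u_\tau^+$, the first solution from \autoref{Theorem 1}, which may be assumed radially symmetric, positive and decreasing, with $E(u_0)=m_\tau$ and $u_0(0)>0$. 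For the bubble I use the extremal $U_{\epsilon,0}$ of \eqref{U_epsilon}, truncated by a fixed radial cutoff to obtain $u_\epsilon\in H^1(\mathbb{R}^N)$. The relevant scaling laws are that $\|\nabla u_\epsilon\|_2^2$ and $A(u_\epsilon)$ stay of order one (the Dirichlet energy and the Hardy-Littlewood-Sobolev energy of the Talenti profile are invariant under the $D^{1,2}$-scaling), while the purely nonlocal quantity satisfies $[u_\epsilon]^2=O(\epsilon^{2-2s})\to0$ and $\|u_\epsilon\|_2,\ \|u_\epsilon\|_p\to0$ as $\epsilon\to0$; hence the fractional and lower-order contributions of the bubble are negligible against its local critical balance.

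Next I form $w_{s,\epsilon}:=u_0+s\,u_\epsilon$ for $s\ge0$, and set $v_{s,\epsilon}:=\tau\,w_{s,\epsilon}/\|w_{s,\epsilon}\|_2\in S(\tau)$, the normalising factor being $1+o(1)$ because $\|u_\epsilon\|_2\to0$. Since $u_0\in\mathcal{M}_\tau^+$ while increasing the amplitude $s$ makes the critical term $A(v_{s,\epsilon})$ grow like $s^{22^*_{\alpha}}$ and thus drives $M(v_{s,\epsilon})$ negative, a continuity/intermediate-value argument (together with \autoref{Lemma 2.3}) produces a value $s=s_\epsilon>0$ for which $\bar w_\epsilon:=v_{s_\epsilon,\epsilon}\in\mathcal{M}_\tau^-$. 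Being an element of $\mathcal{M}_\tau^-$, it is its own fibre-maximiser, so $m_\tau^-\le E(\bar w_\epsilon)=\max_{t\in\mathbb{R}}E(t\star\bar w_\epsilon)$ and it suffices to estimate $E(\bar w_\epsilon)$. Expanding each term of $E$ into the $u_0$-part, the $u_\epsilon$-part and their interaction, and using that the defining identity $M(\bar w_\epsilon)=0$ forces $s_\epsilon^2\|\nabla u_\epsilon\|_2^2=s_\epsilon^{22^*_{\alpha}}A(u_\epsilon)+o(1)$, a short computation gives that the bubble amplitude-profile contributes precisely $c_*$, whence
$$E(\bar w_\epsilon)\le m_\tau+c_*-\frac{1}{22^*_{\alpha}}\,\mathcal{I}_\epsilon+\mathcal{E}_\epsilon,$$
where $\mathcal{I}_\epsilon>0$ collects the cross integrals of $u_0$ against $u_\epsilon$ in $A(u_0+s_\epsilon u_\epsilon)$, and $\mathcal{E}_\epsilon$ gathers the remaining errors from the gradient and fractional cross terms, from the subcritical term, and from the $L^2$-renormalisation.

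The decisive point, and the main obstacle, is to show that the interaction gain dominates, i.e.\ $\mathcal{E}_\epsilon=o(\mathcal{I}_\epsilon)$, so that $E(\bar w_\epsilon)<m_\tau+c_*$ for all small $\epsilon$, which is \eqref{4.1}. Because $A$ enters $E$ with a negative sign and the double convolution is strictly superadditive, the interaction is a genuine gain; quantifying it, however, requires delicate Hardy-Littlewood-Sobolev asymptotics for the mixed integrals of $|u_0|^{2^*_{\alpha}}$ against $|u_\epsilon|^{2^*_{\alpha}}$ (and the lower mixed powers) against the Riesz kernel, where the positivity $u_0(0)>0$ at the concentration point is used. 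The orders of both $\mathcal{I}_\epsilon$ and $\mathcal{E}_\epsilon$ depend sensitively on $N$, so I expect the argument to split into the usual dimensional regimes ($N=3,4$ versus $N\ge5$); a secondary technical care is to verify that the choice $s_\epsilon$, the renormalisation factor, and the fibre-maximality of $\bar w_\epsilon$ are mutually consistent and remain in compact ranges, so that \autoref{Lemma 2.3} applies uniformly in $\epsilon$.
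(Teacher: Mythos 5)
Your skeleton is the same as the paper's: perturb the first solution $u_{\tau}^+$ by a truncated Aubin--Talenti bubble $u_{\epsilon}$, renormalise into $S(\tau)$, project onto $\mathcal{M}_{\tau}^-$ via \autoref{Lemma 2.3} and an intermediate-value argument, and reduce \eqref{4.1} to an upper bound for the energy of the competitor. That part is sound, although one technical choice already works against you: the paper renormalises mass by the dilation $\bar{u}_{\epsilon,t}(x)=\zeta^{\frac{N-2}{2}}\hat{u}_{\epsilon,t}(\zeta x)$, which by \eqref{4.9} leaves both $\left\| \nabla \cdot \right\|_2^2$ and $A(\cdot)$ invariant, so the critical balance producing the constant $\left(\frac{2^*_{\alpha}-1}{22^*_{\alpha}}\right)S_{\alpha}^{\frac{2^*_{\alpha}}{2^*_{\alpha}-1}}$ is never perturbed by the renormalisation; your scalar renormalisation $\tau w_{s,\epsilon}/\left\| w_{s,\epsilon}\right\|_2$ multiplies the two critical terms by powers of $1+O(\epsilon^{\frac{N-2}{2}})$, creating additional signless errors of exactly the order of the interaction gain you intend to exploit.

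The genuine gap is that the strict inequality --- which \emph{is} the lemma --- is never established: you reduce it to the claim $\mathcal{E}_{\epsilon}=o(\mathcal{I}_{\epsilon})$ and explicitly defer its proof; moreover, the mechanism you propose cannot deliver it in general. Every interaction integral of $u_{\tau}^+$ against the bubble is of size $O(\epsilon^{\frac{N-2}{2}})+O(\epsilon^{\frac{N-\alpha}{2}})$, whereas the bubble's own fractional seminorm --- a positive term that your decomposition never places, since your ``amplitude profile'' $\frac{s^2}{2}\left\|\nabla u_{\epsilon}\right\|_2^2-\frac{s^{22^*_{\alpha}}}{22^*_{\alpha}}A(u_{\epsilon})$ omits $\frac{s^2}{2}[u_{\epsilon}]^2$ --- is of order $\epsilon^{2(1-s)}$ by \eqref{[U_epsilon]}, and this rate is sharp (test the concentration region $|x|\leq \epsilon$, $2\epsilon\leq |y|\leq 3\epsilon$). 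Hence whenever $N-2>4(1-s)$ and $N-\alpha>4(1-s)$ (for instance all $N\geq 6$ with $\alpha\leq 2$), one has $\mathcal{E}_{\epsilon}/\mathcal{I}_{\epsilon}\rightarrow\infty$: the Brezis--Nirenberg-type interaction gain, even using $u_{\tau}^+(0)>0$, is swamped by the nonlocal error, and calling the fractional contribution ``negligible'' conflates $o(1)$ with negligible at the relevant order $\epsilon^{\frac{N-2}{2}}$. The paper's route is structurally different precisely here: it never asks the interaction to dominate. Through the superadditivity inequalities \eqref{4.15}--\eqref{4.16} and, crucially, the fact that $u_{\tau}^+$ solves \eqref{prob}, \emph{all} first-order cross terms (gradient, fractional, subcritical, Choquard) collapse into the single quantity $\lambda_{\tau}^+ t\int_{\mathbb{R}^N} u_{\tau}^+u_{\epsilon}$ with $\lambda_{\tau}^+<0$, after which the conclusion rests on the explicit asymptotics \eqref{grad_u_epsilon}--\eqref{U_epsilon_p} applied to $\max_{t}f_{u_{\epsilon}}(t)$, with the subcritical term providing the absorption. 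Your proposal never invokes the Euler--Lagrange equation of $u_{\tau}^+$ (nor the sign of $\lambda_{\tau}^+$); without that cancellation the signless gradient and fractional cross terms, also of order $\epsilon^{\frac{N-2}{2}}$, cannot be absorbed either. So what is missing is not postponed bookkeeping: it is the central estimate, and the mechanism sketched for it breaks down over a substantial part of the parameter range.
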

		\begin{proof}
			Let $\phi\in C_c^{\infty}(\mathbb{R}^N)$ be a cut-off function such that 
			\begin{equation}\label{4.2}
				\left\{
				\begin{array}{cl}
					0\leq \phi(x) \leq 1 & \text{ for all } x\in \mathbb{R}^N,\\
					\phi (x) = 1 & \text{ for } x\in B_1(0),\\
					\phi(x)=0  & \text{ for } x\in \mathbb{R}^N\setminus B_2(0),
				\end{array}
				\right.
			\end{equation}
			then, taking $u_{\epsilon}=\phi U_{\epsilon,0}$, where $U_{\epsilon,0}$ is as defined in \eqref{U_epsilon}, by \cite[lemma~1.46]{Willem2012Minimax}, \cite[lemma~3.3, eq~3.7]{Cassani2018Choquard} and \cite[lemma~5.3]{da2024MIxed} we have:
			\begin{equation}\label{grad_u_epsilon}
				\left\| \nabla u_{\epsilon}\right\|_2^2 = S^{\frac{N}{2}}+O(\epsilon^{N-2}),
			\end{equation}
			\begin{equation}\label{u_epsilon}
				\left\| u_{\epsilon}\right\|_2^2 = \left\{
				\begin{array}{ll}
					K_1\epsilon^2+O(\epsilon^{N-2}) & \text{ for } N\geq 5,\\
					K_1\epsilon^2|\ln(\epsilon)|+O(\epsilon^2) & \text{ for } N=4,\\
					K_1\epsilon +O(\epsilon^2) & \text{ for } N=3,	
				\end{array}
				\right.
			\end{equation}
			\begin{equation}\label{A(u_epsilon)}
				A(u_{\epsilon}) \geq (A_{\alpha}C_{\alpha})^{\frac{N}{2}}S_{\alpha}^{\frac{N+\alpha}{2}}-O(\epsilon^{\frac{N+\alpha}{2}}),
			\end{equation}
			\begin{equation}\label{[U_epsilon]}
				[u_{\epsilon}]^2 = O(\epsilon^{m_{N,s}}) \text{ where }
				m_{N,s}=\left\{\begin{array}{cl}
					2(1-s) & \text{ for } N\geq 4 \text{ and } N=3 \text{ with } s>\frac{1}{2},\\
					1 & \text{ for } N=3 \text{ with } s\leq\frac{1}{2}.
				\end{array}\right.
			\end{equation}
			and 
			\begin{equation}\label{U_epsilon_p}
				\left\| u_{\epsilon}\right\|_p^p=\left\{
				\begin{array}{ll}
					K_2 \epsilon^{N-\frac{p(N-2)}{2}}+O(\epsilon^{\frac{p(N-2)}{2}}) & \text{ for } N>\frac{2p}{p-1},\\
					K_2\epsilon^{\frac{N}{2}}\ln(1/\epsilon)+O(\epsilon^{\frac{N}{2}}) & \text{ for } N=\frac{2p}{p-1},\\
					O(\epsilon^{\frac{p(N-2)}{2}}) & \text{ for } N<\frac{2p}{p-1}.
				\end{array}
				\right.
			\end{equation}
			For $\zeta, t\geq 0$, define 
			$$\hat{u}_{\epsilon,t}(x):=u_{\tau}^+(x)+tu_{\epsilon}(x);\text{ and }\bar{u}_{\epsilon,t}(x):=\zeta^{\frac{N-2}{2}}\hat{u}(\zeta x),$$ 
			with $u_{\tau}^+$ being the radial solution deduced in \autoref{Theorem 1}. We will see that $\displaystyle m_{\tau}^-\leq \sup_{t\geq 0}E(\bar{u}_{\epsilon,t})$ and $E(\bar{u}_{\epsilon,t})<m_{\tau}+\left(\frac{2^*_{\alpha}-1}{22^*_{\alpha}}\right)S_{\alpha}^{\frac{2^*_{\alpha}}{2^*_{\alpha}-1}}$ for all $t>0$ and small enough $\epsilon>0$.
			Clearly,
			\begin{equation}\label{4.9}
				\left\{
				\begin{array}{c}
					\left\| \nabla \bar{u}_{\epsilon,t} \right\|_2^2=\left\| \nabla\hat{u}_{\epsilon,t}\right\| _2^2; \;\;
					[\bar{u}_{\epsilon,t}]^2 = \zeta^{2(s-1)}[\hat{u}_{\epsilon,t}]^2;\;\;\left\| \bar{u}_{\epsilon,t}\right\|_2^2=\zeta^{-2}\left\| \hat{u}_{\epsilon,t} \right\|_2^2\\
					\left\| \bar{u}_{\epsilon,t} \right\|_p^p=\zeta^{p\gamma_p-p}\left\| \hat{u}_{\epsilon,t}\right\|_p^p;\;\; A(\bar{u}_{\epsilon,t})= A(\hat{u}_{\epsilon,t}),
				\end{array}
				\right.
			\end{equation}
			then, taking $\zeta=\zeta_{\epsilon,t}=\frac{\left\| \hat{u}_{\epsilon,t}\right\|_2}{\tau}$, we get $\bar{u}_{\epsilon,t}\in S(\tau)$. 
			Thus by \autoref{Lemma 2.3}, we can find $\bar{q}_{\epsilon,t}\in \mathbb{R}$ such that $\bar{q}_{\epsilon,t}\star \bar{u}_{\epsilon,t}\in \mathcal{M}_{\tau}^-$, or, $q_{\epsilon,t}\circledast \bar{u}_{\epsilon,t}\in \mathcal{M}_{\tau}^-$ where $q_{\epsilon,t}=e^{\bar{q}_{\epsilon,t}}>0$. Then,
			\begin{equation*}
				0  =  M(q_{\epsilon,t}\circledast \bar{u}_{\epsilon,t}) = q_{\epsilon,t}^2 \left\| \nabla \bar{u}_{\epsilon,t} \right\|_2^2+s q_{\epsilon,t}^{2s}[\bar{u}_{\epsilon,t}]^2-\mu\gamma_p q_{\epsilon,t}^{p\gamma_p}\left\| \bar{u}_{\epsilon,t}\right\|_p^p-q_{\epsilon,t}^{22^*_{\alpha}}A(\bar{u}_{\epsilon,t}),
			\end{equation*}
			and hence,
			\begin{equation}\label{4.10}
				q_{\epsilon,t}^{2-p\gamma_p} \left\| \nabla \bar{u}_{\epsilon,t} \right\|_2^2+s q_{\epsilon,t}^{2s-p\gamma_p}[\bar{u}_{\epsilon,t}]^2=\mu\gamma_p \left\| \bar{u}_{\epsilon,t}\right\|_p^p+q_{\epsilon,t}^{22^*_{\alpha}-p\gamma_p}A(\bar{u}_{\epsilon,t}).
			\end{equation}
			Now, since $0\star \hat{u}_{\epsilon,0}=u_{\tau}^+\in \mathcal{M}_{\tau}^+$, by \autoref{Lemma 2.3}, $\bar{q}_{\epsilon,0}>0$, that is, $q_{\epsilon,0}>1$. Also, by \eqref{4.10}
			\begin{equation*}
				q_{\epsilon,t}^{22^*_{\alpha}} \leq \frac{q_{\epsilon,t}^2 \left\| \nabla\bar{u}_{\epsilon,t}\right\|_2^2+sq_{\epsilon,t}^{2s}[\bar{u}_{\epsilon,t}]^2}{A(\bar{u}_{\epsilon,t})},
			\end{equation*}
			defining $B_{\epsilon,t}:=\frac{\left\| \nabla \bar{u}_{\epsilon,t}\right\|_2^2+s[\bar{u}_{\epsilon,t}]^2}{A(\bar{u}_{\epsilon,t})}$, we get $0<q_{\epsilon,t}\leq \max\{B_{\epsilon,t}^{\frac{1}{2(2^*_{\alpha}-1)}}, B_{\epsilon,t}^{\frac{1}{2(2^*_{\alpha}-s)}}\}$. By \eqref{4.9} we have:
			\begin{eqnarray*}
				B_{\epsilon,t} & = & \frac{\left\| \nabla \bar{u}_{\epsilon,t}\right\|_2^2+s[\bar{u}_{\epsilon,t}]^2}{A(\bar{u}_{\epsilon,t})}=\frac{\left\|\nabla \hat{u}_{\epsilon,t}\right\|_2^2+s\zeta_{\epsilon,t}^{2(s-1)}[\hat{u}_{\epsilon,t}]^2}{A(\hat{u}_{\epsilon,t})}\\
				& = &\frac{1}{A(\hat{u}_{\epsilon,t})}\left(\left\|\nabla \hat{u}_{\epsilon,t}\right\|_2^2+s\left(\frac{\tau}{\left\| \hat{u}_{\epsilon,t}\right\|_2^2}\right)^{2(1-s)}[\hat{u}_{\epsilon,t}]^2\right)\leq \frac{\left\| \nabla \hat{u}_{\epsilon,t}\right\|_2^2+s[\hat{u}_{\epsilon,t}]^2}{A(\hat{u}_{\epsilon,t})}\\
				& \leq & C\left(\frac{\left\| \nabla u_{\tau}^+\right\|_2^2+t^2\left\| \nabla u_{\epsilon}\right\|_2^2+s[u_{\tau}^+]^2+st^2[u_{\epsilon}]^2}{t^{22^*_{\alpha}}A(u_{\epsilon})}\right)\rightarrow 0 \text{ as } t\rightarrow\infty,
			\end{eqnarray*}
			and hence $q_{\epsilon,t}\rightarrow 0$ as $t\rightarrow\infty$. 
			Since $q_{\epsilon,0}>1$, there exists some $t_{\epsilon}>0$ such that $q_{\epsilon,t_{\epsilon}}=1$, which implies that
			\begin{equation}\label{4.11}
				m_{\tau}^-=\inf_{u\in \mathcal{M}_{\tau}^-}E(u)\leq E(q_{\epsilon,t_{\epsilon}}\circledast \bar{u}_{\epsilon,t_{\epsilon}})= E(\bar{u}_{\epsilon,t_{\epsilon}})\leq \sup_{t\geq 0}E(\bar{u}_{\epsilon,t}). 
			\end{equation}
			Now, since $\hat{u}_{\epsilon,t}\geq u_{\tau}^+$ by \eqref{4.9} and definition of $\bar{u}_{\epsilon,t}$, we have:
			\begin{eqnarray}\label{4.12}
				E(\bar{u}_{\epsilon,t}) & = & \frac{\left\| \nabla u_{\tau}^++t\nabla u_{\epsilon}\right\|_2^2}{2}+\frac{\zeta_{\epsilon,t}^{2(s-1)}}{2}[u_{\tau}^++tu_{\epsilon}]^2-\frac{\zeta_{\epsilon,t}^{p(\gamma_p-1)}\mu}{p}\left\| u_{\tau}^++tu_{\epsilon}\right\|_p^p-\frac{A(u_{\tau}^++tu_{\epsilon})}{22^*_{\alpha}}\nonumber\\		
				& \leq & \frac{\left\| \nabla u_{\tau}^+\right\|_2^2}{2}+\frac{t^2\left\| \nabla u_{\epsilon}\right\|_2^2}{2}+t\int_{\mathbb{R}^N}\nabla u_{\tau}^+\nabla u_{\epsilon} +\frac{[u_{\tau}^+]^2}{2}+\frac{t^2[u_{\epsilon}]^2}{2}+t\ll u_{\tau}^+, u_{\epsilon}\gg\nonumber\\
				&& -\mu\frac{\left\| u_{\tau}^+\right\|_p^p}{p}-\frac{A(u_{\tau}^+)}{22^*_{\alpha}} \nonumber\\
				& = & E(u_{\tau}^+)+\frac{t^2\left\| \nabla u_{\epsilon}\right\|_2^2}{2}+t\int_{\mathbb{R}^N}\nabla u_{\tau}^+\nabla u_{\epsilon}+\frac{t^2[u_{\epsilon}]^2}{2}+t\ll u_{\tau}^+, u_{\epsilon}\gg\nonumber\\
				&& \rightarrow E(u_{\tau}^+) =m_{\tau} <0 \text{ as } t\rightarrow0^+.
			\end{eqnarray}
			Also, 
			\begin{eqnarray}\label{4.13}
				E(\bar{u}_{\epsilon,t}) & \leq & \frac{\left\| \nabla u_{\tau}^+\right\|_2^2}{2}+\frac{[u_{\tau}^+]^2}{2}-\mu\frac{\left\| u_{\tau}^+\right\|_p^p}{p}+t^2\frac{\left\| \nabla u_{\epsilon} \right\|_2^2}{2}+t^2\frac{[u_{\epsilon}]^2}{2}+t\int_{\mathbb{R}^N}\nabla u_{\tau}^+\nabla u_{\epsilon}\nonumber\\
				&& + t\ll u_{\tau}^+,u_{\epsilon}\gg-\frac{t^{22^*_{\alpha}}}{22^*_{\alpha}}A(u_{\epsilon})\nonumber\\
				&&\rightarrow -\infty \text{ as } t\rightarrow+\infty,
			\end{eqnarray}
			and by \autoref{Lemma 2.3}, $E(\bar{u}_{\epsilon,t_{\epsilon}})=E(0\star \bar{u}_{\epsilon,t_{\epsilon}})=E(\bar{q}_{\epsilon,t_{\epsilon}}\star \bar{u}_{\epsilon,t_{\epsilon}})>0$, thus there exists some $t_0>0$ large enough such that $E(\bar{u}_{\epsilon,t})<0$ for $t\in (0,\frac{1}{t_0})\cup (t_0,\infty)$. Therefore, we need to estimate $E(\bar{u}_{\epsilon,t})$ in $[\frac{1}{t_0}, t_0]$. Above analysis can be summerized by the following plot:
			\begin{center}
				\begin{tikzpicture}
					\draw[thick,<->] (-1,0)--(4.5,0) node[anchor=north west]{$t$};
					\draw[thick,<->] (0,-1.5)--(0,1) node[anchor=east]{$E(\bar{u}_{\epsilon,t})$};
					\draw[thick] (1,0).. controls (0,-0.5)..(0,-1) node[anchor=east]{$m_{\tau}$};
					\draw[thick,->] (3,0)..controls(4,-0.5)..(4.5,-1);
					\draw [thick](2,0) node[anchor= north]{$t_{\epsilon}$};
					\draw [dash dot](2,0)--(2,0.5) node[anchor=west]{$E(\bar{u}_{\epsilon,t_{\epsilon}})$};
					\draw (2,0.75) node[anchor=north]{$\bullet$};
					\draw (1,0) node[anchor=north]{$1/t_0$};
					\draw (3,0) node[anchor=north]{$t_0$};
				\end{tikzpicture}
			\end{center}
			Now, let us study $E(\bar{u}_{\epsilon,t})$ for $t\in [1/t_0,t_0]$. Since,
			\begin{equation*}
				\zeta_{\epsilon,t}^2 =  \frac{\left\| \hat{u}_{\epsilon,t}\right\|_2^2}{\tau^2}=1+\frac{t^2}{\tau^2}\int_{\mathbb{R}^N}|u_{\epsilon}|^2+\frac{2t}{\tau^2}\int_{\mathbb{R}^N}u_{\tau}^+u_{\epsilon},
			\end{equation*}
			and hence, 
			\begin{eqnarray*}
				\zeta_{\epsilon,t}^{p\gamma_p-p} & = &  \left(1+\left(\frac{t^2}{\tau^2}\int_{\mathbb{R}^N}|u_{\epsilon}|^2+\frac{2t}{\tau^2}\int_{\mathbb{R}^N}u_{\tau}^+u_{\epsilon}\right)\right)^{\frac{p(\gamma_p-1)}{2}}\\
				& \geq &1+\frac{p(\gamma_p-1)}{2}\left(\frac{t^2}{\tau^2}\int_{\mathbb{R}^N}|u_{\epsilon}|^2+\frac{2t}{\tau^2}\int_{\mathbb{R}^N}u_{\tau}^+u_{\epsilon}\right),
			\end{eqnarray*}
			by \eqref{4.9} and the fact that $\hat{u}_{\epsilon,t}\geq u_{\tau}^+$, we get
			\begin{eqnarray}\label{4.14}
				E(\bar{u}_{\epsilon,t}) & \leq & \frac{\left\| \nabla \hat{u}_{\epsilon,t}\right\|_2^2}{2}+\frac{[\hat{u}_{\epsilon,t}]^2}{2}-\frac{A(\hat{u}_{\epsilon,t})}{22^*_{\alpha}}\nonumber\\
				& & -\left(1+\frac{p(\gamma_p-1)}{2}\left(\frac{t^2}{\tau^2}\int_{\mathbb{R}^N}|u_{\epsilon}|^2+\frac{2t}{\tau^2}\int_{\mathbb{R}^N}u_{\tau}^+u_{\epsilon}\right)\right)\frac{\mu\left\| \hat{u}_{\epsilon,t}\right\|_p^p}{p}.
			\end{eqnarray}
			Further, we have:
			\begin{equation}\label{4.15}
				A(\hat{u}_{\epsilon,t}) = A(u_{\tau}^++tu_{\epsilon}) \geq A(u_{\tau}^+)+A(tu_{\epsilon}) +22^*_{\alpha}\int_{\mathbb{R}^N}(I_{\alpha}*|u_{\tau}^+|^{2^*_{\alpha}})|u_{\tau}^+|^{2^*_{\alpha}-2}u_{\tau}^+(tu_{\epsilon}),
			\end{equation}
			and
			\begin{equation}\label{4.16}
				\left\| \hat{u}_{\epsilon,t} \right\|_p^p \geq \left\| u_{\tau}^+\right\|_p^p+\left\| tu_{\epsilon}\right\|_p^p= \left\| u_{\tau}^++tu_{\epsilon}\right\|_p^p\geq \left\| u_{\tau}^+\right\|_p^p+\left\| tu_{\epsilon}\right\|_p^p+pt\int_{\mathbb{R}^N}|u_{\tau}^+|^{p-2}u_{\tau}^+u_{\epsilon},
			\end{equation}
			thus, using \eqref{4.15} and \eqref{4.16} in \eqref{4.14} 
			\begin{eqnarray*}
				E(\bar{u}_{\epsilon,t}) & \leq & E(u_{\tau}^+)+E(tu_{\epsilon})+\left(t\int_{\mathbb{R}^N}\nabla u_{\tau}^+\nabla u_{\epsilon}+t\ll u_{\tau}^+,u_{\epsilon}\gg-t\mu\int_{\mathbb{R}^N}|u_{\tau}^+|^{p-2}u_{\tau}^+u_{\epsilon}\right.\\
				&& \left. -\int_{\mathbb{R}^N}(I_{\alpha}*|u_{\tau}^+|^{2^*_{\alpha}})|u_{\tau}^+|^{2^*_{\alpha}-2}u_{\tau}^+(tu_{\epsilon})\right)+\frac{(1-\gamma_p)t^2}{2\tau^2}\mu\left\| u_{\epsilon}\right\|_2^2\left\| \hat{u}_{\epsilon,t}\right\|_p^p\\
				&& +\mu\frac{t(1-\gamma_p)}{\tau^2}\left\| \hat{u}_{\epsilon,t}\right\|_p^p\int_{\mathbb{R}^N}u_{\tau}^+u_{\epsilon},
			\end{eqnarray*}
			moreover, since $u_{\tau}^+$ solves \eqref{prob}, we get:
			\begin{eqnarray}\label{4.17}
				E(\bar{u}_{\epsilon,t}) & \leq & E(u_{\tau}^+)+E(tu_{\epsilon})+\lambda_{\tau}^+\int_{\mathbb{R}^N}u_{\tau}^+(tu_{\epsilon})+\frac{\mu(1-\gamma_p)t^2}{2\tau^2}\left\| u_{\epsilon}\right\|_2^2\left\| \hat{u}_{\epsilon,t}\right\|_p^p\nonumber\\
				&& +\frac{\mu t(1-\gamma_p)}{\tau^2}\left\| \hat{u}_{\epsilon,t}\right\|_p^p\int_{\mathbb{R}^N}u_{\tau}^+u_{\epsilon}\nonumber\\
				& = & m_{\tau}+E(tu_{\epsilon})+\frac{\mu t(1-\gamma_p)}{\tau^2}\left(\left\| \hat{u}_{\epsilon,t}\right\|_p^p-\left\| u_{\tau}^+\right\|_p^p\right)\int_{\mathbb{R}^N}u_{\tau}^+u_{\epsilon}\nonumber\\
				&& +\frac{\mu t^2(1-\gamma_p)}{2\tau^2}\left\| u_{\epsilon}\right\|_2^2\left\| \hat{u}_{\epsilon,t}\right\|_p^p+\frac{t(1-s)}{\tau^2}[u_{\tau}^+]^2\int_{\mathbb{R}^N}u_{\tau}^+u_{\epsilon}.
			\end{eqnarray}
			Since $u_{\tau}^+$ is a radially symmetric solution of \eqref{prob}, as done in \cite[lemma~5.5]{Jeanjean2022multiple} one can deduce that :
			$$\int_{\mathbb{R}^N}u_{\tau}^+u_{\epsilon}=O(\epsilon^{\frac{N-2}{2}}); \text{ and } \int_{\mathbb{R}^N}|u_{\tau}^+|^{p-1}u_{\epsilon}=O(\epsilon^{\frac{N-2}{2}}),$$
			then \eqref{4.17} becomes:
			\begin{eqnarray*}
				E(\bar{u}_{\epsilon,t}) & \leq & m_{\tau}+E(tu_{\epsilon})+\frac{\mu t(1-\gamma_p)}{\tau^2}\left(O(\epsilon^{\frac{N-2}{2}})+\left\| tu_{\epsilon}\right\|_p^p\right)O(\epsilon^{\frac{N-2}{2}})\\
				&& +\frac{\mu t^2(1-\gamma_p)}{2\tau^2}\left\| u_{\epsilon}\right\|_2^2\left\| u_{\tau}^++tu_{\epsilon}\right\|_p^p+\frac{t(1-s)}{\tau^2}[u_{\tau}^+]O(\epsilon^{\frac{N-2}{2}})\\
				& = & m_{\tau} +E(tu_{\epsilon}) +O(\epsilon^{N-2})+O(\left\| u_{\epsilon}\right\|_p^p)O(\epsilon^{\frac{N-2}{2}}) +O(\left\| u_{\epsilon}\right\|_2^2)\\
				&& +O(\left\| u_{\epsilon}\right\|_2^2)O(\left\| u_{\epsilon}\right\|_p^p)+O(\epsilon^{\frac{N-2}{2}})\\
				& \leq & m_{\tau} +f_{u_{\epsilon}}(t) \text{ for small } \epsilon>0,\text{ where }f_{u}(t):=\frac{t^2T(u)^2}{2}-\frac{t^{22^*_{\alpha}}A(u)}{22^*_{\alpha}},
			\end{eqnarray*}
			also, since $f_u$ has global maxima at $t_u=\left(\frac{T(u)^2}{A(u)}\right)^{\frac{1}{2(2^*_{\alpha}-1)}}$, by \eqref{grad_u_epsilon}, \eqref{[U_epsilon]} and \eqref{A(u_epsilon)} we get:
			\begin{eqnarray*}
				E(\bar{u}_{\epsilon,t}) & \leq & m_{\tau} +f_{u_{\epsilon}}(t_{u_{\epsilon}}) =m_{\tau}+\left(\frac{2^*_{\alpha}-1}{22^*_{\alpha}}\right)\left(\frac{T(u_{\epsilon})^2}{A(u_{\epsilon})^{\frac{1}{2^*_{\alpha}}}}\right)^{\frac{2^*_{\alpha}}{2^*_{\alpha}-1}}\\
				& \leq & m_{\tau} +  \left(\frac{2^*_{\alpha}-1}{22^*_{\alpha}}\right)\left(\frac{S^{\frac{N}{2}}+O(\epsilon^{N-2})+O(\epsilon^{m_{N,s}})}{\left((A_{\alpha}C_{\alpha})^{\frac{N}{2}}S_{\alpha}^{\frac{N+\alpha}{2}}-O(\epsilon^{\frac{N+\alpha}{2}})\right)^{\frac{1}{2^*_{\alpha}}}}\right)^{\frac{2^*_{\alpha}}{2^*_{\alpha}-1}}\\
				& < & m_{\tau}+\left(\frac{2^*_{\alpha}-1}{22^*_{\alpha}}\right)S_{\alpha}^{\frac{2^*_{\alpha}}{2^*_{\alpha}-1}} \text{ as } \epsilon \text{ goes to zero, for all } t\in [1/t_0,t_0],
			\end{eqnarray*}
			therefore, by \eqref{4.11} we are done. 
		\end{proof}
		\noindent For $0<\tau<\min\{\tau_0,\tau_1\}$, let $u\in \mathcal{M}_{\tau}^{\pm}$, then $v_{\beta}:=\frac{\beta}{\tau}u\in S(\beta)$, for all $\beta>0$. Now, for $0<\beta<\min\{\tau_0,\tau_1\}$  by \autoref{Lemma 2.3}, there exists $t_{\pm}(\beta)>0$ such that $t_{\pm}(\beta)\circledast v_{\beta}\in \mathcal{M}_{\beta}^{\pm}$.  Clearly, since $v_{\tau}=u\in \mathcal{M}_{\tau}^{\pm}$, $t_{\pm}(\tau)=1$. Further, we have following results for $t_{\pm}(\beta)$.
		\begin{lemma}\label{Lemma 4.2}
			For $N\geq 3$, $2<p<2+\frac{4s}{N}$ and $0<\tau<\min\{\tau_0,\tau_1\}$, $t_{\pm}$ is differentiable at $\tau$, with
			\begin{equation*}
				t_{\pm}'(\tau) =\frac{p\gamma_p\mu\left\| u \right\|_p^p+22^*_{\alpha}A(u)-2s[u]^2-2\left\| \nabla u \right\|_2^2}{\tau\left(2s^2[u]^2+2\left\| \nabla u \right\|_2^2-\mu p\gamma_p^2\left\| u \right\|_p^p-22^*_{\alpha}A(u)\right)},
			\end{equation*}
			Moreover, for sufficiently large $\mu>0$, $E(t_{\pm}(\beta)\circledast v_{\beta})<E(u)$ whenever $\tau<\beta<\min\{\tau_0,\tau_1\}$.
		\end{lemma}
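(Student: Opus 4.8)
The plan is to handle both assertions through the single implicit equation characterising $t_\pm(\beta)$, namely $M\big(t_\pm(\beta)\circledast v_\beta\big)=0$, after reading off how every term of $M$ and $E$ scales under the dilation $\circledast$ combined with the linear rescaling $v_\beta=\frac{\beta}{\tau}u$. Writing $c=\beta/\tau$ and $q=t_\pm(\beta)$, a change of variables gives $\|\nabla(q\circledast v_\beta)\|_2^2=q^2c^2\|\nabla u\|_2^2$, $[q\circledast v_\beta]^2=q^{2s}c^2[u]^2$, $\|q\circledast v_\beta\|_p^p=q^{p\gamma_p}c^p\|u\|_p^p$ and $A(q\circledast v_\beta)=q^{22^*_{\alpha}}c^{22^*_{\alpha}}A(u)$, since $p\gamma_p=\tfrac{N(p-2)}{2}$ and $22^*_{\alpha}=\tfrac{2(N+\alpha)}{N-2}$ are precisely the dilation weights of the $L^p$ and Choquard terms. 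I set $F(q,\beta):=M(q\circledast v_\beta)$, so that $F(t_\pm(\beta),\beta)=0$ and $F(1,\tau)=M(u)=0$.

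For the differentiability I would apply the implicit function theorem to $F$ at $(1,\tau)$. Differentiating in $q$ and evaluating at $(1,\tau)$ yields exactly $2\|\nabla u\|_2^2+2s^2[u]^2-\mu p\gamma_p^2\|u\|_p^p-22^*_{\alpha}A(u)$, which is the defining quantity of $\mathcal{M}_\tau^\pm$; it is nonzero because $u\in\mathcal{M}_\tau^\pm$ and $\mathcal{M}_\tau^0=\emptyset$ by \autoref{Lemma 2.3}. Hence $t_\pm$ is $C^1$ near $\tau$ with $t_\pm'(\tau)=-\partial_\beta F/\partial_q F$ at $(1,\tau)$; since $\partial_\beta F=\tfrac1\tau\partial_c F$ and $\partial_c F(1,\tau)=2\|\nabla u\|_2^2+2s[u]^2-p\mu\gamma_p\|u\|_p^p-22^*_{\alpha}A(u)$, this produces the stated formula. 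I would also note that the identical argument applies at every $\beta_0\in(0,\min\{\tau_0,\tau_1\})$, because there too $\mathcal{M}_{\beta_0}^0=\emptyset$, so $t_\pm$ is differentiable on the whole interval; this is needed below.

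For the energy inequality I set $\Lambda(\beta):=E(t_\pm(\beta)\circledast v_\beta)$, so $\Lambda(\tau)=E(u)$. The key point is an envelope identity: with $G(t,\beta):=E(t\circledast v_\beta)$ one has $\partial_tG(t,\beta)=\tfrac1t M(t\circledast v_\beta)$, because $t\circledast v_\beta=(\log t)\star v_\beta$ and $M(r\star v)=\tfrac{d}{dr}E(r\star v)$; thus $\partial_tG$ vanishes at $t=t_\pm(\beta)$, whence $\Lambda'(\beta)=\partial_\beta G(t_\pm(\beta),\beta)$. Inserting the homogeneities, then eliminating $A$ and the gradient term through the constraint $M(w)=0$, where $w:=t_\pm(\beta)\circledast v_\beta$, collapses the expression to
\[
\Lambda'(\beta)=\frac{1}{\beta}\Big((1-s)[w]^2-(1-\gamma_p)\mu\|w\|_p^p\Big).
\]

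The main obstacle is the sign of this quantity. Since $s<1$ and $\gamma_p<1$, both terms are positive, so $\Lambda'(\beta)<0$ is equivalent to the mass-subcritical term dominating the nonlocal Gagliardo term, i.e. $(1-\gamma_p)\mu\|w\|_p^p>(1-s)[w]^2$. This is where the largeness of $\mu$ enters: I must show that along the whole family $w=t_\pm(\beta)\circledast v_\beta$, $\beta\in(\tau,\min\{\tau_0,\tau_1\})$, the $\mu$-weighted $L^p$ mass controls $[w]^2$. I expect to extract this from the constraint $M(w)=0$ together with the a priori bounds underlying $\mathcal{M}_\beta^\pm$ (for instance $\mathcal{M}_\beta^+\subset A_{R_0}$ from \autoref{Lemma 2.5} and the smallness encoded in $\tau_0,\tau_1$), exploiting that for large $\mu$ the admissible elements sit in the regime where the subcritical perturbation is dominant. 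Once $\Lambda'<0$ is secured on the interval, the differentiability of $t_\pm$ there and the mean value theorem give $\Lambda(\beta)<\Lambda(\tau)=E(u)$ for every $\tau<\beta<\min\{\tau_0,\tau_1\}$, which completes the proof.
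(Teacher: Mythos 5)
Your first part (differentiability and the formula for $t_{\pm}'(\tau)$) is correct and is essentially identical to the paper's argument: the paper also defines the function $\Phi(\beta,t)=M(t\circledast v_{\beta})$ through the scaling identities, checks $\partial_t\Phi(\tau,1)=2s^2[u]^2+2\|\nabla u\|_2^2-\mu p\gamma_p^2\|u\|_p^p-22^*_{\alpha}A(u)\neq 0$ precisely because $u\in\mathcal{M}_{\tau}^{\pm}$ and $\mathcal{M}_{\tau}^0=\emptyset$, and applies the implicit function theorem. Your second part starts from a correct and in fact cleaner identity than the paper's: your envelope computation does give
\begin{equation*}
\frac{d}{d\beta}E\bigl(t_{\pm}(\beta)\circledast v_{\beta}\bigr)=\frac{1}{\beta}\Bigl((1-s)[w]^2-\mu(1-\gamma_p)\|w\|_p^p\Bigr),\qquad w=t_{\pm}(\beta)\circledast v_{\beta},
\end{equation*}
which at $\beta=\tau$ reduces exactly to the derivative the paper obtains by Taylor-expanding $E(t_{\pm}(\beta)\circledast v_{\beta})$ about $\beta=\tau$ with the help of \eqref{4.18}. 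The paper then checks negativity of this derivative only at $\beta=\tau$, where $w=u$ is the \emph{fixed} function of the lemma, so that ``$\mu$ sufficiently large'' means $\mu(1-\gamma_p)\|u\|_p^p>(1-s)[u]^2$, a condition on known, $\beta$-independent quantities.

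The genuine gap is the step you yourself flag as the ``main obstacle'' and then leave as an expectation: to run your mean-value argument you must prove $\mu(1-\gamma_p)\|w\|_p^p>(1-s)[w]^2$ for \emph{every} $\beta\in(\tau,\min\{\tau_0,\tau_1\})$, and no proof is given. This is not a routine uniformity check. First, the family $w=t_{\pm}(\beta)\circledast v_{\beta}$ depends on $\mu$: the constraint $M=0$ defining $t_{\pm}(\beta)$ contains $\mu$, so the manifolds $\mathcal{M}_{\beta}^{\pm}$ and the fiber points move as $\mu$ grows, and ``choose $\mu$ large after the fact'' is circular unless you establish bounds on $[w]^2$ and $\|w\|_p^p$ that are uniform in both $\beta$ and $\mu$. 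Second, no such bounds are available from what you cite: $\mathcal{M}_{\beta}^+\subset A_{R_0}$ (\autoref{Lemma 2.5}) gives an upper bound for the $\mathcal{M}^+$ branch only, there is no analogous a priori bound on $[w]^2$ along the $\mathcal{M}^-$ branch (where $T(w)>R_0$ is forced), and in neither case do you produce the required positive lower bound on $\|w\|_p^p$ along the family. So as written the argument closes only in a right-neighbourhood of $\tau$, which is weaker than the assertion of the lemma. For comparison, the paper avoids this entirely by verifying the sign at the single point $\beta=\tau$ and passing from the first-order expansion to the conclusion on the whole interval; your route, if completed, would actually justify more than the paper's, but the decisive sign condition along the whole family is exactly the content that remains unproven in your proposal.
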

		\begin{proof}
			Since $M(t_{\pm}(\beta)\circledast v_{\beta})=0$ and $v_{\beta}=\frac{\beta}{\tau}u$, for all $0<\beta<\min\{\tau_0,\tau_1\}$,
			\begin{equation*}
				0 = 
				\left(\frac{\beta t_{\pm}(\beta)}{\tau}\right)^2\left\| \nabla u \right\|_2^2+s\left(\frac{\beta t_{\pm}^s(\beta)}{\tau}\right)^2[u]^2-\mu\gamma_p\left(\frac{\beta t_{\pm}^{\gamma_p}(\beta)}{\tau}\right)^p\left\| u \right\|_p^p-\left(\frac{\beta t_{\pm}(\beta)}{\tau}\right)^{22^*_{\alpha}}A(u).
			\end{equation*}
			Defining $\Phi: (0,\min\{\tau_0,\tau_1\})\times (0,\infty)\rightarrow \mathbb{R}$ as follows:
			$$\Phi(\beta, t):=				\left(\frac{\beta t}{\tau}\right)^2\left\| \nabla u \right\|_2^2+s\left(\frac{\beta t^s}{\tau}\right)^2[u]^2-\mu\gamma_p\left(\frac{\beta t^{\gamma_p}}{\tau}\right)^p\left\| u \right\|_p^p-\left(\frac{\beta t}{\tau}\right)^{22^*_{\alpha}}A(u),$$
			we get, $\Phi(\beta,t_{\pm}(\beta))=0$, for all $0<\beta<\min\{\tau_0,\tau_1\}$ and since $u\in \mathcal{M}_{\tau}^{\pm}$, we have:
			$$\frac{\partial}{\partial t}\Phi(\tau,1)= 2s^2[u]^2+2\left\| \nabla u \right\|_2^2-\mu p\gamma_p^2\left\| u \right\|_p^p-22^*_{\alpha}A(u)\neq 0,$$
			thus, by implicit function theorem $\beta\mapsto t_{\pm}(\beta)$ is differentiable at $\tau$ and 
			\begin{equation*}
				t_{\pm}'(\tau)=-\frac{\frac{\partial }{\partial \beta}\Phi(\tau,1)}{\frac{\partial}{\partial t}\Phi(\tau,1)}=\frac{\mu p\gamma_p\left\| u \right\|_p^p+22^*_{\alpha}A(u)-2s[u]^2-2\left\| \nabla u \right\|_2^2}{\tau\left(2s^2[u]^2+2\left\| \nabla u \right\|_2^2-\mu p\gamma_p^2\left\| u \right\|_p^p-22^*_{\alpha}A(u)\right)},
			\end{equation*}
			hence
			\begin{equation}\label{4.18}
				1+\tau t_{\pm}'(\tau)=\frac{2s(s-1)[u]^2+\mu p\gamma_p(1-\gamma_p)\left\| u \right\|_p^p}{2s^2[u]^2+2\left\| \nabla u \right\|_2^2-\mu p\gamma_p^2\left\| u \right\|_p^p-22^*_{\alpha}A(u)}.
			\end{equation}
			Now, 
			\begin{eqnarray*}
				E(t_{\pm}(\beta)\circledast v_{\beta}) & = & \left(\frac{1}{2}-\frac{1}{p\gamma_p}\right)\left\| \nabla t_{\pm}(\beta)\circledast v_{\beta} \right\|_2^2+\left(\frac{1}{2}-\frac{s}{p\gamma_p}\right)[t_{\pm}(\beta)\circledast v_{\beta}]^2\\
				&& +\left(\frac{1}{p\gamma_p}-\frac{1}{22^*_{\alpha}}\right)A(t_{\pm}(\beta)\circledast v_{\beta})\\
				& = & \left(\frac{1}{2}-\frac{1}{p\gamma_p}\right)\left(\frac{t_{\pm}(\beta)\beta}{\tau}\right)^2\left\| \nabla u \right\|_2^2+\left(\frac{1}{2}-\frac{s}{p\gamma_p}\right)\left(\frac{t_{\pm}^{s}(\beta)\beta}{\tau}\right)^2[u]^2\\
				&& +\left(\frac{1}{p\gamma_p}-\frac{1}{22^*_{\alpha}}\right)\left(\frac{t_{\pm}(\beta)\beta}{\tau}\right)^{22^*_{\alpha}}A(u)\\
				&= &  \left(\frac{1}{2}-\frac{1}{p\gamma_p}\right)\left(1+(\beta-\tau)\left(\frac{t_{\pm}(\beta)\beta-\tau t_{\pm}(\tau)}{\tau(\beta-\tau)}\right)\right)^2\left\| \nabla u \right\|_2^2\\
				&& +\left(\frac{1}{2}-\frac{s}{p\gamma_p}\right)\left(1+(\beta-\tau)\left(\frac{t_{\pm}^s(\beta)\beta-\tau t_{\pm}^s(\tau)}{\tau(\beta-\tau)}\right)\right)^2[u]^2\\
				&& +\left(\frac{1}{p\gamma_p}-\frac{1}{22^*_{\alpha}}\right)\left(1+(\beta-\tau)\left(\frac{t_{\pm}(\beta)\beta-\tau t_{\pm}(\tau)}{\tau(\beta-\tau)}\right)\right)^{22^*_{\alpha}}A(u)\\
				& = & \left(\frac{1}{2}-\frac{1}{p\gamma_p}\right)\left\| \nabla u \right\|_2^2+\left(\frac{1}{2}-\frac{s}{p\gamma_p}\right)[u]^2+\left(\frac{1}{p\gamma_p}-\frac{1}{22^*_{\alpha}}\right)A(u) +o(\beta-\tau)^2\\
				&& +\left(2\frac{(1+\tau t_{\pm}'(\tau))}{\tau}\left(\frac{1}{2}-\frac{1}{p\gamma_p}\right)\left\| \nabla u \right\|_2^2+2\frac{(1+s\tau t_{\pm}'(\tau))}{\tau}\left(\frac{1}{2}-\frac{s}{p\gamma_p}\right) [u]^2\right.\\
				&& \left. 22^*_{\alpha}\frac{(1+\tau t_{\pm}'(\tau))}{\tau}\left(\frac{1}{p\gamma_p}-\frac{1}{22^*_{\alpha}}\right)A(u)\right)(\beta-\tau),
			\end{eqnarray*}
			further, since $M(u)=0$, one can deduce that
			\begin{eqnarray*}
				E(t_{\pm}(\beta)\circledast v_{\beta}) & = & \frac{2(\beta-\tau)}{\tau}\left(\gamma_p(1+\tau t_{\pm}'(\tau))\left(\frac{1}{2}-\frac{1}{p\gamma_p}\right)\left\| u \right\|_p^p+ \frac{(2^*_{\alpha}-1)(1+\tau t_{\pm}'(\tau))}{p\gamma_p}A(u)\right.\\
				&& \left. \frac{(1-s)s\tau t_{\pm}'(\tau)}{p\gamma_p}[u]^2\right)+ E(u)  +o(\beta-\tau)^2,
			\end{eqnarray*}
			and hence, by \eqref{4.18}
			$$E(t_{\pm}(\beta)\circledast v_{\beta}) = E(u)-\mu \frac{(1-\gamma_p)(\beta-\tau)}{\tau}\left\| u \right\|_p^p+\frac{(\beta-\tau)(1-s)}{\tau}[u]^2+o(\beta-\tau)^2.$$
			For sufficiently large $\mu>0$, we have:
			$$\frac{\partial}{\partial\beta}E(t_{\pm}(\beta)\circledast v_{\beta})_{|\beta=\tau}=-\frac{\mu (1-\gamma_p)\left\| u \right\|_p^p}{\tau}+\frac{(1-s)}{\tau}[u]^2<0,$$
			thus for $\tau<\beta<\min\{\tau_0,\tau_1\}$, $E(t_{\pm}(\beta)\circledast v_{\beta})<E(u)$.
		\end{proof}
			\noindent Denoting $\mathcal{M}_{r,\tau}^-:=\mathcal{M}_{\tau}^-\cap H_r(\mathbb{R}^N)$, we get $m_{r,\tau}^-:=\displaystyle \inf_{u\in \mathcal{M}_{r,\tau}^-}E(u)=\inf_{u\in \mathcal{M}_{r,\tau}^-}E(u)=m_{\tau}^-$, by symmetrization and the fact that $\mathcal{M}_{r,\tau}^-\subset\mathcal{M}_{\tau}^-$. Now, let us prove our final result:
				\begin{myproof}{Theorem}{\ref{Theorem 2}}
					Let $\{\bar{u}_n\}$ be the minimizing sequence for $E$ on $\mathcal{M}_{r,\tau}^-$, then by Ekeland variational principle, \cite[Theorem~1.1]{Ghoussoub}, we can find a sequence $\{u_n\}\in \mathcal{M}_{r,\tau}^-$ such that
					\begin{equation}\label{4.20}
						\left\{
						\begin{array}{cl}
							\left\| \bar{u}_n-u_n\right\|_{H^1(\mathbb{R}^N)}\rightarrow 0 & \text{ as } n\rightarrow\infty,\\
							E(u_n) \rightarrow m_{r,\tau}^- & \text{ as } n\rightarrow \infty,\\
							M(u_n)\rightarrow 0 & \text{ as } n\rightarrow \infty,\\
							E'|_{\mathcal{M}_{r,\tau}^-}(u_n)\rightarrow 0 & \text{ as } n\rightarrow \infty.
						\end{array}
						\right.
					\end{equation}
					Now, by \eqref{4.20} we have
					\begin{eqnarray}\label{4.21}
						m_{r,\tau}^- & = & \lim_{n\rightarrow \infty}E(u_n)=\lim_{n\rightarrow \infty}\left(E(u_n)-\frac{M(u_n)}{2}\right)\nonumber\\
						& = & \lim_{n\rightarrow \infty}\left(\frac{1}{p}\left(\frac{p\gamma_p}{2}-1\right)\left\| u_n \right\|_p^p+\frac{(1-s)}{2}[u_n]^2+\left(\frac{2^*_{\alpha}-1}{22^*_{\alpha}}\right)A(u_n)\right),
					\end{eqnarray}
					and, since $E(u_n)\leq m_{r,\tau}^-+1$, for large $n\in \mathbb{N}$, by Gagliardo-Nirenberg inequality \eqref{G_N_inequality}
					\begin{eqnarray*}
						\frac{(2^*_{\alpha}-1)}{22^*_{\alpha}}T(u_n)^2 & \leq &  \frac{(2^*_{\alpha}-1)}{22^*_{\alpha}}\left\| \nabla u_n \right\|_2^2+\frac{(2^*_{\alpha}-s)}{22^*_{\alpha}}[u_n]^2\\
						& = & E(u_n)-\frac{1}{22^*_{\alpha}}M(u_n)+\frac{1}{p}\left(1-\frac{p\gamma_p}{22^*_{\alpha}}\right) \left\| u_n \right\|_p^p\\
						& \leq & m_{r,\tau}^-+1+\frac{C_{N,p}(22^*_{\alpha}-p\gamma_p)}{p22^*_{\alpha}}\tau^{p(1-\gamma_p)}T(u_n)^{p\gamma_p},
					\end{eqnarray*}
					thus, $\{u_n\}$ is bounded and hence weakly convergent upto a subsequence in $H^1(\mathbb{R}^N)$. Denoting the weakly convergent subsequence as $\{u_n\}$ itself, let $u_0\in H_r(\mathbb{R}^N)$ be such that $u_n\rightharpoonup u_0$, weakly. Thanks to the compact embedding $H_r(\mathbb{R})\hookrightarrow L^q(\mathbb{R}^N)$, for all $q\in (2,2^*)$, we get $u_n\rightarrow u_0$ in $L^p(\mathbb{R}^N)$. Next, we claim that $u_0\neq 0$.\\
					Suppose $u_0=0$, then 
					$$0 = \lim_{n\rightarrow \infty}M(u_n) = \lim_{n\rightarrow \infty}\left(\left\| \nabla u_n \right\|_2^2+s[u_n]^2-A(u_n)\right),$$
					and hence $\displaystyle \lim_{n\rightarrow \infty}\left(\left\| \nabla u_n \right\|_2^2+s[u_n]^2\right)= \lim_{n\rightarrow \infty}A(u_n)$. Since $\{u_n\}$ is bounded in $H^1(\mathbb{R}^N)$, the sequence $\{\left\| \nabla u_n\right\|_2^2+s[u_n]^2\}$ is convergent upto a subsequence in $\mathbb{R}$.
					Now, let $$l=\displaystyle \lim_{n\rightarrow \infty}\left(\left\| \nabla u_n \right\|_2^2+s[u_n]^2\right)= \lim_{n\rightarrow \infty}A(u_n),$$
					then by \eqref{S_alpha}, we get $l(S_{\alpha}^{2^*_{\alpha}}-l^{2^*_{\alpha}-1})\leq 0$, thus, either $l=0$ or $l\geq S_{\alpha}^{\frac{2^*_{\alpha}}{2^*_{\alpha}-1}}$. For $l\geq S_{\alpha}^{\frac{2^*_{\alpha}}{2^*_{\alpha}-1}}$,  by \eqref{4.21} we get:
					\begin{eqnarray*}
						m_{\tau}^-=m_{r,\tau}^- & = & \lim_{n\rightarrow \infty}\left(\frac{1}{p}\left(\frac{p\gamma_p}{2}-1\right)\left\| u_n \right\|_p^p+\frac{(1-s)}{2}[u_n]^2+\left(\frac{2^*_{\alpha}-1}{22^*_{\alpha}}\right)A(u_n)\right)\\
						& \geq & \lim_{n\rightarrow \infty}\left(\frac{2^*_{\alpha}-1}{22^*_{\alpha}}\right)A(u_n)\geq \left(\frac{2^*_{\alpha}-1}{22^*_{\alpha}}\right)S_{\alpha}^{\frac{2^*_{\alpha}}{2^*_{\alpha}-1}}>m_{\tau}+\left(\frac{2^*_{\alpha}-1}{22^*_{\alpha}}\right)S_{\alpha}^{\frac{2^*_{\alpha}}{2^*_{\alpha}-1}},
					\end{eqnarray*}
					but this contradicts \autoref{Lemma 4.1}. Also, if $l=0$, we will end up with $m_{r,\tau}^-=0$, but since $0<m_{\tau}^-=m_{r,\tau}^-$, we get a contradiction. Therefore, $u_0\neq 0$. Now, define $v_n:=u_n-u_0$, clearly $v_n\rightharpoonup 0$ in $H^1(\mathbb{R}^N)$.\\
					Case 1: $\left\| v_n\right\|_{H^1(\mathbb{R}^N)}\rightarrow 0 $.\\
					In this case, we get strong convergence of $\{u_n\}$ in $H^1(\mathbb{R}^N)$, and hence $u_0\in \mathcal{M}_{r,\tau}^-$ with $E(u_0)=m_{\tau}^-$ and hence $E'_{\mathcal{M}_{\tau}}(u_0)=0$. Thus, by \autoref{Lemma 4.3}, $u_0$ solves \eqref{prob} for some $\lambda_0\in \mathbb{R}$, and since $M(u)=0$, we have:
					\begin{equation*}
						\lambda_0\tau^2  =  \left\| \nabla u_0 \right\|_2^2+[u_0]^2-\mu \left\| u_0 \right\|_p^p-A(u_0)= (1-s)[u_0]^2+\mu(\gamma_p-1)\left\| u_0 \right\|_p^p<0,
					\end{equation*}
					for sufficiently large $\mu>0$. Hence, taking $u_{\tau}^-=u_0$ and $\lambda_{\tau}^-=\lambda_0$, we are done.\\
					Case 2: $\displaystyle \lim_{n\rightarrow \infty}\left\| v_n \right\|_{H^1(\mathbb{R}^N)}\neq 0$, that is, $\left\| v_n \right\|_{H^1(\mathbb{R}^N)} \geq \tilde{C}>0$ for large $n\in \mathbb{N}$.\\
					Let $\left\| u_0 \right\|_2=r_0$, then by Fatou's lemma, we have $0<r_0\leq \tau$. Now, either $A(v_n)\rightarrow 0$ or there exists a constant $\bar{C}>0$ such that $A(v_n)\geq \bar{C}$ for large $n\in \mathbb{N}$. Let us analyse the two subcases separately:\\
					Subcase 1: $A(v_n)\rightarrow 0$ as $n\rightarrow \infty$.\\
					Since $u_0\in S(r_0)$, by \autoref{Lemma 2.3}, there exists $c_0>0$ such that $c_0\circledast u_0 \in \mathcal{M}_{r,r_0}^-$. Thus, by \cite[lemma~2.4]{Moroz2013groundstates}, compact embedding of $H_r(\mathbb{R}^N)$ in $L^p(\mathbb{R}^N)$, Fatou's lemma and \autoref{Lemma 2.3} we get
					\begin{eqnarray}\label{4.22}
						m_{\tau}^- & = & \lim_{n\rightarrow \infty}E(u_n) \geq \lim_{n\rightarrow \infty}E(c_0 \circledast u_n) \nonumber\\
						& = & \lim_{n\rightarrow \infty}\left(\frac{c_0^2\left\| \nabla u_n \right\|_2^2}{2}+\frac{c_0^{2s}[u_n]^2}{2}-\frac{\mu c_0^{p\gamma_p}\left\| u_n \right\|_p^p}{p}-\frac{c_0^{22^*_{\alpha}}A(u_n)}{22^*_{\alpha}}\right)\nonumber\\
						& \geq & \frac{c_0^2\left\| \nabla u_0 \right\|_2^2}{2}+\frac{c_0^{2s}[u_0]^2}{2}-\frac{\mu c_0^{p\gamma_p}\left\| u_0 \right\|_p^p}{p}-\frac{c_0^{22^*_{\alpha}}A(u_0)}{22^*_{\alpha}}= E(c_0 \circledast u_0)\geq m_{r_0}^-,
					\end{eqnarray}
					also, since $0<r_0\leq \tau$, for any $u\in \mathcal{M}_{r_0}^-$, by \autoref{Lemma 4.2} we can find $v\in \mathcal{M}_{\tau}^-$ such that $E(u)>E(v)\geq \displaystyle \inf_{u\in \mathcal{M}_{\tau}^-}E(u)$ and hence $m_{r_0}^-\geq m_{\tau}^-$. Therefore, $m_{\tau}^-=m_{r_0}^-$. Now, we claim that $r_0=\tau$ and hence $u_{\tau}^-=c_0\circledast u_0$ is the required solution to \eqref{prob} corresponding to some $\lambda_{\tau}^-$ with $\lambda_{\tau}^-<0$ for sufficiently large $\mu>0$ as done in case 1.\\
					Suppose if $0<r_0<\tau<\min\{\tau_0,\tau_1\}$, then by \autoref{Lemma 4.2}, there exists $\bar{v}\in \mathcal{M}_{\tau}^-$ such that $E(c_0\circledast u_0)>E(\bar{v})$, then by \eqref{4.22} we have
					$$m_{r_0}^-=E(c_0\circledast u_0)>E(\bar{v})\geq m_{\tau}^-,$$
					but since $m_{r_0}^-=m_{\tau}^-$, we get contradiction, thus $r_0=\tau$.\\
					Subcase 2: $A(v_n)\geq \bar{C}>0$ for large $n\in \mathbb{N}$.\\
					For every $n\in \mathbb{N}$, define $$s_n:=\left(\frac{\left\| \nabla v_n\right\|_2^2}{A(v_n)}\right)^{\frac{1}{2(2^*_{\alpha}-1)}},$$
					clearly, by boundedness of $\{\frac{1}{A(v_n)}\}$ and $\{u_n\}$ in $H^1(\mathbb{R}^N)$, $\{s_n\}$ is a bounded sequence in $\mathbb{R}$. Now, since $u_0\in S(r_0)$, by \autoref{Lemma 2.3} there exists $c_0>0$ such that $c_0\circledast u_0\in \mathcal{M}_{r_0}^-$. We claim that $s_n\geq c_0$ upto subsequence.\\
					Suppose $s_n<c_0$ for all $n\in \mathbb{N}$, defining 
					$$E_0(u):=\frac{\left\| \nabla u \right\|_2^2}{2}-\frac{A(u)}{22^*_{\alpha}},$$
					by \autoref{Lemma 2.3}, Brezis Lieb lemma and \cite[lemma~2.4]{Moroz2013groundstates} we get,
					\begin{eqnarray}\label{4.23}
						m_{\tau}^- & = & \lim_{n\rightarrow \infty}E(u_n) \geq \lim_{n\rightarrow \infty}E(s_n\circledast u_u) =\lim_{n\rightarrow \infty}\left(E(s_n\circledast u_0)+E(s_n \circledast v_n)\right)\nonumber\\
						& \geq & \lim_{n\rightarrow \infty}\left(E(s_n\circledast u_0)+E_0(s_n \circledast v_n)\right)\geq m_{r_0}^++ \lim_{n\rightarrow \infty}E_0(s_n\circledast v_n).
					\end{eqnarray}
					Now, by \eqref{S_alpha}
					$$E_0(s_n\circledast v_n) = \left(\frac{2^*_{\alpha}-1}{22^*_{\alpha}}\right)\left(\frac{\left\| \nabla v_n \right\|_2^2 }{A(v_n)^{\frac{1}{2^*_{\alpha}}}}\right)^{\frac{2^*_{\alpha}}{2^*_{\alpha}-1}}\geq \left(\frac{2^*_{\alpha}-1}{22^*_{\alpha}}\right)S_{\alpha}^{\frac{2^*_{\alpha}}{2^*_{\alpha}-1}},$$
					thus, by \autoref{Lemma 4.2} 
					$$m_{\tau}^-\geq m_{r_0}^++\left(\frac{2^*_{\alpha}-1}{22^*_{\alpha}}\right)S_{\alpha}^{\frac{2^*_{\alpha}}{2^*_{\alpha}-1}}\geq m_{\tau}^++\left(\frac{2^*_{\alpha}-1}{22^*_{\alpha}}\right)S_{\alpha}^{\frac{2^*_{\alpha}}{2^*_{\alpha}-1}}.$$
					But, this is a contradiction to \autoref{Lemma 4.1}. Thus, there exists a subsequence (denoted as $\{s_n\}$ itself), such that $s_n\geq c_0$ for all $n\in \mathbb{N}$. Now, again proceeding as in \eqref{4.23}
					\begin{equation*}
						m_{\tau}^-  =  \lim_{n\rightarrow \infty}E(u_n) \geq \lim_{n\rightarrow \infty}E(c_0\circledast u_n)\geq \lim_{n\rightarrow \infty}\left(E(c_0\circledast u_0)+E_0(c_0\circledast v_n)\right) \geq E(c_0\circledast u_0),
					\end{equation*}
					because, $c_0\leq s_n$, which implies that $$\frac{c_0^{22^*_{\alpha}}A(v_n)}{\left\| \nabla v_n \right\|_2^2}\leq c_0^2,$$ and hence
					$$E_0(c_0\circledast v_n)\geq \left(\frac{2^*_{\alpha}-1}{22^*_{\alpha}}\right)c_0^{22^*_{\alpha}}A(v_n)\geq0.$$
					Therefore, $E(c_0\circledast u_0) \leq m_{\tau}^-$. Also, since $c_0\circledast u_0\in \mathcal{M}_{r_0}^-$, by \autoref{Lemma 4.2}
					$$m_{\tau}^-\geq E(c_0\circledast u_0)\geq m_{r_0}^-\geq m_{\tau}^-.$$
					Hence $E(c_0\circledast u_0) = m_{\tau}^-$, thus taking $u_{\tau}^-=c_0\circledast u_0$ we get the required result.
				\end{myproof}
				
			\end{document}